\newtheorem{theorem}{Theorem}[section]
\newtheorem{proposition}{Proposition}[section]
\newtheorem{lemma}{Lemma}[section]
\newtheorem{corollary}{Corollary}[section]
\newtheorem{remark}{Remark}[section]
\numberwithin{equation}{section}
\title[Determination of the nonlinear term]{Stable determination of the nonlinear term in a quasilinear elliptic equation by boundary measurements}
\author[Mourad Choulli]{Mourad Choulli}
\address{Universit\'e de Lorraine}
\thanks{The author supported by the grant ANR-17-CE40-0029 of the French National Research Agency ANR (project MultiOnde).}
\email{mourad.choulli@univ-lorraine.fr}
\date{\today}
\subjclass[2010]{35R30} 
\keywords{Determination of the nonlinear term, quasilinear elliptic equation, boundary measurements, localized Dirichlet-to-Neumann map.}
\begin{document}

\begin{abstract}
We establish a Lipschitz stability inequality for the problem of determining the nonlinear term  in a quasilinear elliptic equation by boundary measurements. We give a proof based on a linearization procedure together with special solutions constructed from the fundamental solution of the linearized problem. 
\end{abstract}

\maketitle

\tableofcontents

\section{Introduction}\label{section1}

\subsection{Statement of the problem}

Consider on a bounded domain $\Omega$ of $\mathbb{R}^n$ the quasilinear BVP
\begin{equation}\label{c3}
\left\{
\begin{array}{ll}
\mathrm{div}(a(u)A\nabla u)=0\quad &\mathrm{in}\; \Omega ,
\\
u_{|\Gamma}=f  ,
\end{array}
\right.
\end{equation}
where $a$ is a scalar function and $A$ is a matrix with variable coefficients. Assume that we can define the map $\Lambda_a: f\mapsto \partial_\nu u(f)$ between two well chosen spaces, where $u(f)$ is the solution of the BVP \eqref{c3} when it exists. In the case where $a$ is supposed to be unknown we ask whether $\Lambda_a$ determines uniquely $a$. This problem can be seen as a Calder\'on type problem for the quasilinear BVP \eqref{c3}. We are mainly interested in establishing a stability inequality for this inverse problem.

\subsection{Assumptions and notations}

Throughout this text, $0<\alpha <1$ and $\Omega$ is a  $C^{2,\alpha}$ bounded domain of $\mathbb{R}^n$ $(n\ge 3)$ with boundary $\Gamma$. Fix $A=(a^{ij})\in C^{1,\alpha} (\mathbb{R}^n,\mathbb{R}^{n\times n})$ satisfying $(a^{ij}(x))$ is symmetric for each $x\in \mathbb{R}^n$,
\begin{equation}\label{c1}
\kappa^{-1}|\xi|^2\le A(x)\xi \cdot \xi ,\quad x,\xi \in \mathbb{R}^n, 
\end{equation}
and
\begin{equation}\label{c1.1}
\max_{1\le i,j\le n}\|a^{ij}\|_{L^\infty (\mathbb{R}^n)}\le \kappa,
\end{equation}
where $\kappa >1$ is a given constant.

Pick $\varkappa >0$. Let  $\mu : [0,\infty)\rightarrow [\varkappa,\infty)$ and $\gamma : [0,\infty)\rightarrow [0,\infty)$ be two nondecreasing functions. If $\varrho=\varrho(n,|\Omega|)$ denotes the constant appearing in \eqref{c4} then consider the assumptions

\vskip.1cm
$(\textbf{a1})$  $a\in C^1(\mathbb{R})$, $a\ge \varkappa $ and  $q_a=a'/a \in C^{0,1}(\mathbb{R})$.

\vskip.1cm
$(\textbf{a2})$ $a(z)\le \mu (\varrho^{-1}|z|)$, $z\in \mathbb{R}$.

\vskip.1cm
$(\textbf{a3})$  $|a'(z)|\le \gamma (\varrho^{-1}|z|)$, $z\in \mathbb{R}$.

\vskip.1cm
Let $\tilde{\gamma} : [0,\infty)\rightarrow [0,\infty)$ be another nondecreasing function. We need also to introduce the following assumption, 

\vskip.1cm
$(\bf{\tilde{a}1})$  $a\in C^2(\mathbb{R})$, $a\ge \varkappa $ and  $|a''(z)|\le \tilde{\gamma} (\varrho^{-1}|z|)$, $z\in \mathbb{R}$.

\vskip.1cm
Note that $(\bf{\tilde{a}1})$ implies $(\textbf{a1})$.

\vskip.1cm
The set of functions $a:\mathbb{R}\rightarrow \mathbb{R}$ satisfying $(\textbf{a1})$ (resp. $(\bf{\tilde{a}1}))$, $(\textbf{a2})$ and $(\textbf{a3})$ is denoted hereafter by $\mathscr{A}$ (resp. $\tilde{\mathscr{A}})$.

\vskip.1cm
In the rest of this text $\Gamma_0$ denotes a nonempty open subset of $\Gamma$. We define $H_{\Gamma_0}^{1/2}(\Gamma)$ as follows
\[
H_{\Gamma_0}^{1/2}(\Gamma)=\{f\in H^{1/2}(\Gamma);\; \mathrm{supp}(f)\subset \Gamma_0\}.
\]
We equip $H_{\Gamma_0}^{1/2}(\Gamma)$ with the norm of $H^{1/2}(\Gamma)$.

\vskip.1cm
Also, fix $\Gamma_1\Supset \Gamma_0$ an open subset of $\Gamma$ and $\chi\in C_0^\infty (\Gamma_1)$ so that $\chi=1$ in $\overline{\Gamma}_0$. 

\vskip.1cm
For any $t\in \mathbb{R}$, $\mathfrak{f}_t$  will denote the constant function given by $\mathfrak{f}_t(s)=t$, $s\in \Gamma$. 

\vskip.1cm
Finally, 
\begin{align*}
&C_m=C_m(n,\Omega, \kappa ,\varkappa, \alpha , \mu(m),\gamma(m))>0,\quad m\ge 0,
\\
&C^0_m=C^0_m(n,\Omega, \kappa ,\varkappa, \alpha, m, \mu(m),\gamma(m))>0,\quad m\ge 0,
\\
&C^1_m=C^1_m(n,\Omega, \kappa ,\varkappa,\alpha, m, \mu(m),\gamma(m),\tilde{\gamma}(m),\Gamma_0,\Gamma_1)>0,\quad m\ge 0,
\end{align*} 
will denote generic constants.

\subsection{Main result}

We show in Subsection \ref{sub2.1} that, under assumption $(\textbf{a1})$, for each $f\in C^{2,\alpha}(\Gamma)$ the BVP \eqref{c3} admits a unique solution $u_a=u_a(f)\in C^{2,\alpha}(\overline{\Omega})$. Furthermore, when $a$ satisfies both (\textbf{a1}) and (\textbf{a2}) the Dirichlet-to-Neumann map
\[
\Lambda_a:C^{2,\alpha}(\Gamma)\rightarrow H^{-1/2}(\Gamma):f\mapsto \partial_\nu u_a(f) 
\]
is well defined.

\vskip.1cm
Set $C_{\Gamma_0}^{2,\alpha}(\Gamma)=\{f\in C^{2,\alpha}(\Gamma);\; \mathrm{supp}(f)\subset \Gamma_0\}$ and define the family of localized Dirichlet-to-Neumann maps $(\tilde{\Lambda}_a^t)_{t\in \mathbb{R}}$ as follows
\[
\tilde{\Lambda}_a^t : f\in C_{\Gamma_0}^{2,\alpha}(\Gamma)\mapsto \chi\Lambda_a(\mathfrak{f}_t+f)\in H^{-1/2}(\Gamma),\quad t\in \mathbb{R}.
\]

We will prove in Subsection \ref{sub2.2} that, under the assumption $(\textbf{a3})$, for each $t\in \mathbb{R}$ and $a\in \mathscr{A}$, $\tilde{\Lambda}_a^t$ is Fr\'echet differentiable in a neighborhood of the origin. Furthermore, $d\tilde{\Lambda}_a^t(0)$, the Fr\'echet differential of $\tilde{\Lambda}_a^t$ at $0$, has an extension, still denoted by $d\tilde{\Lambda}_a^t(0)$, belonging to $\mathscr{B}(H_{\Gamma_0}^{1/2}(\Gamma),H^{-1/2}(\Gamma))$, and 
\[
\sup_{|t|\le \tau} \|d\tilde{\Lambda}_a^t(0)\|_{\mathrm{op}}<\infty,\quad \tau >0.
\]
Here and Henceforth $\|\cdot\|_{\mathrm{op}}$ stands for the norm of $\mathscr{B}(H_{\Gamma_0}^{1/2}(\Gamma),H^{-1/2}(\Gamma))$.

\vskip.1cm
We remark that since $C_{\Gamma_0}^{2,\alpha}(\Gamma)$ is dense in $H_{\Gamma_0}^{1/2}(\Gamma)$ the above extension of $d\tilde{\Lambda}_a^t(0)$ is entirely determined by $d\tilde{\Lambda}_a^t(0)$.

\begin{theorem}\label{theoremc1}
For any $a_1,a_2\in \tilde{\mathscr{A}}$ and $\tau>0$ we have
\[
\|a_1-a_2\|_{C([-\tau ,\tau])}\le C_{\tau}^1 \sup_{|t|\le \tau }\|d\tilde{\Lambda}_{a_1}^t(0)-d\tilde{\Lambda}_{a_2}^t(0)\|_{\mathrm{op}}.
\]
\end{theorem}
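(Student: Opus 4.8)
The plan is to compute the Fréchet differential $d\tilde{\Lambda}_a^t(0)$ in closed form and to observe that its entire dependence on $a$ collapses into the single scalar $a(t)$. First I would use the uniqueness part of the well-posedness statement of Subsection~\ref{sub2.1}: since $\mathfrak{f}_t\in C^{2,\alpha}(\Gamma)$ and the constant function $u\equiv t$ solves \eqref{c3} with data $\mathfrak{f}_t$, we have $u_a(\mathfrak{f}_t)\equiv t$ and hence $\nabla u_a(\mathfrak{f}_t)=0$. Linearising $\mathrm{div}(a(u)A\nabla u)=0$ about this trivial background, and setting $v=\partial_s u_a(\mathfrak{f}_t+sf)\big|_{s=0}$, every term carrying a factor $\nabla u_a(\mathfrak{f}_t)$ vanishes, leaving the $a$-\emph{independent} problem $\mathrm{div}(A\nabla v)=0$ in $\Omega$, $v_{|\Gamma}=f$. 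Writing $\Lambda_A\colon f\mapsto A\nabla v_f\cdot\nu|_\Gamma$ for the conormal Dirichlet-to-Neumann map of the fixed linear operator $\mathrm{div}(A\nabla\,\cdot\,)$ (where $v_f$ is this $a$-independent extension), the same linearisation of the conormal derivative $a(u)A\nabla u\cdot\nu$ attached to \eqref{c3}, after again discarding the terms containing $\nabla u_a(\mathfrak{f}_t)$, yields the identity
\[
d\tilde{\Lambda}_a^t(0)=a(t)\,\chi\,\Lambda_A\qquad\text{in }\mathscr{B}(H_{\Gamma_0}^{1/2}(\Gamma),H^{-1/2}(\Gamma)).
\]
Its rigorous justification — Fréchet differentiability together with the control of the remainder that legitimises passing to the differential — is exactly the content of Subsection~\ref{sub2.2} and is where the $C^2$ bound in $(\bf{\tilde{a}1})$ is spent; this identity is the conceptual core of the proof.

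Granting the formula, the theorem is essentially algebraic. For $a_1,a_2\in\tilde{\mathscr{A}}$ and every $t$,
\[
d\tilde{\Lambda}_{a_1}^t(0)-d\tilde{\Lambda}_{a_2}^t(0)=\bigl(a_1(t)-a_2(t)\bigr)\,\chi\,\Lambda_A,
\]
whence $\|d\tilde{\Lambda}_{a_1}^t(0)-d\tilde{\Lambda}_{a_2}^t(0)\|_{\mathrm{op}}=|a_1(t)-a_2(t)|\,\|\chi\Lambda_A\|_{\mathrm{op}}$. Because $\chi\Lambda_A$ is independent of $t$, taking the supremum over $|t|\le\tau$ gives
\[
\|\chi\Lambda_A\|_{\mathrm{op}}\,\|a_1-a_2\|_{C([-\tau,\tau])}=\sup_{|t|\le\tau}\|d\tilde{\Lambda}_{a_1}^t(0)-d\tilde{\Lambda}_{a_2}^t(0)\|_{\mathrm{op}}.
\]
Everything therefore reduces to the quantitative non-degeneracy bound $\|\chi\Lambda_A\|_{\mathrm{op}}\ge c_0>0$, after which the asserted inequality holds with $C_\tau^1=c_0^{-1}$ — a constant which, in this argument, may even be taken independent of $\tau$.

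The remaining and genuinely analytic step is this lower bound, and it is here that the special solutions enter. It suffices to produce a single $f\in H_{\Gamma_0}^{1/2}(\Gamma)$ with $\chi\Lambda_A f\neq 0$. I would build it from the fundamental solution $E(\cdot,\cdot)$ of $\mathrm{div}(A\nabla\,\cdot\,)$, available since $A\in C^{1,\alpha}$: fix $x_0\in\Gamma_0$, put $y_\varepsilon=x_0+\varepsilon\nu(x_0)\notin\overline{\Omega}$ and let $w_\varepsilon=E(\cdot,y_\varepsilon)$, which solves $\mathrm{div}(A\nabla w_\varepsilon)=0$ in $\Omega$ while its trace and conormal derivative concentrate at $x_0$ as $\varepsilon\to0^+$. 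Cutting the trace off into $\Gamma_0$ by a fixed $\psi\in C_0^\infty(\Gamma_0)$ with $\psi\equiv 1$ near $x_0$, setting $f_\varepsilon=\psi\,w_\varepsilon|_\Gamma$, and comparing the extension $v_{f_\varepsilon}$ with $w_\varepsilon$ near $x_0$, a scaling/symbol analysis (the quotient being governed by the positive ellipticity of $A$ at $x_0$, and $\chi\equiv1$ on $\overline{\Gamma}_0\ni x_0$ being used to discard the cutoff in the output) shows $\|\chi\Lambda_A f_\varepsilon\|_{H^{-1/2}}\gtrsim\|f_\varepsilon\|_{H^{1/2}}$ for small $\varepsilon$, forcing $\|\chi\Lambda_A\|_{\mathrm{op}}>0$. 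I expect the main obstacle to be precisely this localisation bookkeeping: one must verify that restricting the test functions to $\Gamma_0$ and multiplying the output by $\chi$ does not destroy the concentration supplied by $E(\cdot,y_\varepsilon)$, i.e.\ that the singular part lives where $\psi=\chi=1$. For bare positivity one may instead argue by Green's identity, $\langle\Lambda_A f,g\rangle=\int_\Omega A\nabla v_f\cdot\nabla v_g\,dx$: if $\chi\Lambda_A\equiv0$ then $\int_\Omega A\nabla v_f\cdot\nabla v_f\,dx=0$ and hence $v_f\equiv0$ for every $f\in H_{\Gamma_0}^{1/2}(\Gamma)$, which is absurd — but it is the fundamental-solution construction that supplies the effective, quantitative bound.
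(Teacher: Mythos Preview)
Your approach is correct and is genuinely simpler than the paper's. The key identity $d\tilde{\Lambda}_a^t(0)=a(t)\,\chi\Lambda_A$ is indeed what drops out of Proposition~\ref{propositionc1} (combined with the translation $\Lambda_a(\mathfrak{f}_t+f)=\Lambda_{a^t}(f)$, since Subsection~\ref{sub2.2} is formulated only for base points in $\mathcal{B}_m^{\eta_m}\subset C_{\Gamma_0}^{2,\alpha}(\Gamma)$ and $\mathfrak{f}_t$ lies outside that set for $t\ne 0$). Once this formula is in hand, your Green's-identity argument already gives $\|\chi\Lambda_A\|_{\mathrm{op}}>0$: for any nonzero $f\in H_{\Gamma_0}^{1/2}(\Gamma)$ one has, using \eqref{ui}, $\langle\chi\Lambda_A f,f\rangle_{1/2}=\langle\Lambda_A f,f\rangle_{1/2}=\int_\Omega A\nabla v_f\cdot\nabla v_f\,dx>0$. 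No fundamental-solution construction is needed at all for the theorem as stated.

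The paper proceeds differently. Rather than exploiting the collapse to a scalar multiple of a fixed operator, it first proves in Section~\ref{section3} a stability inequality $\|\sigma_1-\sigma_2\|_{C(\Upsilon)}\le C'\|\tilde{\Lambda}_{\sigma_1}-\tilde{\Lambda}_{\sigma_2}\|_{\mathrm{op}}$ for \emph{variable} conformal factors $\sigma_j\in C^{1,\alpha}$ and drifts $P_j\in W^{1,\infty}$, using singular solutions built from the parametrix $H(\cdot,y_\delta)$ (Lemmas~\ref{lemmasp2}--\ref{lemmasp3}) and the blow-up analysis imported from \cite{Ch1}. Only afterwards is this specialised to $\sigma_j=a_j(u_{a_j}(f))$, $P_j=a_j'(u_{a_j}(f))\nabla u_{a_j}(f)$ at $f=0$, where $\sigma_j\equiv a_j(0)$ and $P_j\equiv 0$, and the translation trick extends the conclusion to all $|t|\le\tau$. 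So the paper establishes a strictly stronger intermediate result (boundary stability of variable conductivities from the localized DN map) but uses it only in the trivial constant case. Your route trades that generality for a much shorter argument and, as you note, a constant independent of $\tau$.

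One small correction: the assumption $(\tilde{\mathbf{a}}1)$ is \emph{not} what underwrites the Fr\'echet differentiability of Subsection~\ref{sub2.2}; that section requires only $a\in\mathscr{A}$. In the paper, the $C^2$ bound enters because Lemma~\ref{lemmasp3} needs $P_j\in W^{1,\infty}$, hence $a_j''\in L^\infty_{\mathrm{loc}}$, for the adjoint special solutions. In your argument $P_j\equiv 0$ at the relevant base point, so $(\tilde{\mathbf{a}}1)$ is in fact never used, and the inequality holds for $a_1,a_2\in\mathscr{A}$.
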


The following uniqueness result is straightforward from the preceding theorem.

\begin{corollary}\label{corollaryc1}
Let $a_j\in C^2(\mathbb{R})$ satisfies $a_j\ge \varkappa$, $j=1,2$. Then $\tilde{\Lambda}_{a_1}^t=\tilde{\Lambda}_{a_2}^t$ in a neighborhood of the origin for each $t\in \mathbb{R}$ implies $a_1=a_2$.
\end{corollary}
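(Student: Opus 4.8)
The plan is to deduce this uniqueness result directly from Theorem \ref{theoremc1}, so the entire argument is a reduction: I must check that the hypotheses of the corollary place us in a situation where the theorem applies, and then let the stability inequality do the work. The first thing I would verify is that any $a_j\in C^2(\mathbb{R})$ with $a_j\ge \varkappa$ lies in $\tilde{\mathscr{A}}$ once we fix the auxiliary nondecreasing functions $\mu,\gamma,\tilde{\gamma}$ appropriately. Indeed, $a_j\in C^2(\mathbb{R})$ and $a_j\ge\varkappa$ are precisely the first two requirements of $(\bf{\tilde{a}1})$; the remaining growth bounds $(\textbf{a2})$, $(\textbf{a3})$ and the bound on $|a_j''|$ in $(\bf{\tilde{a}1})$ are not genuine restrictions but merely impose the existence of \emph{some} nondecreasing majorants. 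So I would set, for $m\ge 0$,
\[
\mu(m)=\max_{j=1,2}\;\sup_{|z|\le \varrho m} a_j(z),\qquad
\gamma(m)=\max_{j=1,2}\;\sup_{|z|\le \varrho m}|a_j'(z)|,\qquad
\tilde{\gamma}(m)=\max_{j=1,2}\;\sup_{|z|\le \varrho m}|a_j''(z)|,
\]
each of which is finite by continuity of $a_j,a_j',a_j''$ on the compact set $\{|z|\le\varrho m\}$ and nondecreasing in $m$. With these choices both $a_1$ and $a_2$ satisfy $(\bf{\tilde{a}1})$, $(\textbf{a2})$ and $(\textbf{a3})$ simultaneously, hence $a_1,a_2\in\tilde{\mathscr{A}}$, and Theorem \ref{theoremc1} is applicable to the pair $(a_1,a_2)$ for every $\tau>0$.

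Next I would turn the hypothesis $\tilde{\Lambda}_{a_1}^t=\tilde{\Lambda}_{a_2}^t$ (in a neighborhood of the origin, for each $t\in\mathbb{R}$) into a statement about the Fr\'echet differentials. Since the two localized Dirichlet-to-Neumann maps coincide on a full neighborhood of $0$ in $C_{\Gamma_0}^{2,\alpha}(\Gamma)$, their Fr\'echet differentials at $0$ must agree: $d\tilde{\Lambda}_{a_1}^t(0)=d\tilde{\Lambda}_{a_2}^t(0)$ for every $t\in\mathbb{R}$. By the density of $C_{\Gamma_0}^{2,\alpha}(\Gamma)$ in $H_{\Gamma_0}^{1/2}(\Gamma)$ noted just before the theorem, the bounded extensions to $\mathscr{B}(H_{\Gamma_0}^{1/2}(\Gamma),H^{-1/2}(\Gamma))$ are uniquely determined and therefore also coincide, so that
\[
\sup_{|t|\le \tau}\bigl\|d\tilde{\Lambda}_{a_1}^t(0)-d\tilde{\Lambda}_{a_2}^t(0)\bigr\|_{\mathrm{op}}=0,\qquad \tau>0.
\]

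Feeding this into Theorem \ref{theoremc1} gives $\|a_1-a_2\|_{C([-\tau,\tau])}\le C_\tau^1\cdot 0=0$ for every $\tau>0$, whence $a_1=a_2$ on $[-\tau,\tau]$ for all $\tau$, i.e.\ $a_1=a_2$ on $\mathbb{R}$. I do not anticipate a serious analytic obstacle here, since the heavy lifting is already contained in Theorem \ref{theoremc1}; the only point requiring genuine care is the \emph{membership verification} $a_j\in\tilde{\mathscr{A}}$. One must be slightly careful that the majorants $\mu,\gamma,\tilde{\gamma}$ may be chosen depending only on $a_1,a_2$ (which is legitimate, as the constant $C_\tau^1$ is allowed to depend on $\mu(m),\gamma(m),\tilde{\gamma}(m)$), and that the constant $\varrho=\varrho(n,|\Omega|)$ appearing in the definition of these assumptions is fixed independently of $a_j$, so that the suprema above are indeed well defined before the $a_j$ are constrained. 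Once this bookkeeping is settled, the corollary follows immediately.
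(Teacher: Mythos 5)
Your proposal is correct and follows exactly the paper's intended route: the paper declares the corollary ``straightforward from the preceding theorem'' and, in the remark immediately after, supplies precisely the majorants $\mu(\tau)=\max_{|z|\le\tau}\max_{j=1,2}a_j(\varrho z)$, $\gamma(\tau)=\max_{|z|\le\tau}\max_{j=1,2}|a_j'(\varrho z)|$, $\tilde{\gamma}(\tau)=\max_{|z|\le\tau}\max_{j=1,2}|a_j''(\varrho z)|$ that you constructed to place $a_1,a_2$ in $\tilde{\mathscr{A}}$. Your remaining steps --- equality of the maps near $0$ forces equality of the Fr\'echet differentials at $0$ and of their density-determined extensions, so the right-hand side of the stability inequality vanishes for every $\tau>0$ --- are exactly the implicit argument the paper leaves to the reader.
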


It is worth noticing that if $a_1$ and $a_2$ are as in Corollary \ref{corollaryc1} then $a_1$ and $a_2$ satisfy also $(\textbf{a2})$, $(\textbf{a3})$ and $(\bf{\tilde{a}1})$ with
\[
\mu (\tau)=\max_{|z|\le \tau }\max_{j=1,2}a_j(\varrho z),\quad \gamma (\tau)=\max_{|z|\le \tau }\max_{j=1,2}|a'_j(\varrho z)|,\quad \tau \ge 0,
\]
and
\[
 \tilde{\gamma} (\tau)=\max_{|z|\le \tau }\max_{j=1,2}|a''_j(\varrho z)|,\quad \tau \ge 0.
\]
These $\mu$, $\gamma$ and $\tilde{\gamma}$ depends of course on $a_1$ and $a_2$.

\subsection{Comments}

There are only very few stability inequalities in the literature devoted to the determination of nonlinear terms in quasilinear and semilinear elliptic equations by boundary measurements. The semilinear case was studied in \cite{CHY} by using a method based on linearization together with stability inequality for the problem of determining the potential in a Schr\"odinger equation by boundary measurements. The result in \cite{CHY} was recently improved in \cite{Ch2}. Both quasilinear and semilinear elliptic inverse problems were considered in \cite{Ki} where a method exploiting the singularities of fundamental solutions was used to establish stability inequalities. This method was  used previously in \cite{Ch1} to obtain a stability inequality at the boundary of the conformal factor in an inverse conductivity problem. We show in the present  paper  how we can modify the proof of \cite[(1.2) of Theorem 1.1]{Ch1} to derive the stability inequality stated in Theorem \ref{theoremc1}. The localization argument was inspired by that in \cite{Ki}.

\vskip.1cm
There is a recent rich literature dealing with uniqueness issue concerning the determination of nonlinearities in elliptic equations by boundary measurements using the so-called higher order linearization method. We refer to the recent work \cite{CLLO} and references therein for more details. We also quote without being exhaustive the following references \cite{CFKKU, EPS1, EPS2, IS, IY,KN,KKU, KU1,KU,LLLS1,LLLS, MU, Su, SU} on  semilinear and quasilinear elliptic inverse problems.

\section{Preliminaries}

\subsection{Solvability of the BVP and the Dirichlet-to-Neumann map}\label{sub2.1}

Suppose that $a$ satisfies (\textbf{a1}) and introduce the divergence form quasilinear operator 
\[
Q(x,u,\nabla u)=\mathrm{div}(a(u)A\nabla u),\quad x\in \Omega,\; u\in C^2(\Omega).
\]
The following observation will be crucial in the sequel :  $u\in C^2(\Omega)$ satisfies $Q(x,u,\nabla u)=0$ in $\Omega$ if and only if
\begin{equation}\label{c2}
Q_0(x,u,\nabla u)=\sum_{i,j=1}^n a^{ij}(x)\partial_{ij}^2u(x)+b(x,u,\nabla u)=0,\quad x\in \Omega,
\end{equation}
where
\[ 
b(x,z,p)=B(x)\cdot p+q_a(z) A(x)p\cdot p,\quad  x\in \Omega ,\; z\in \mathbb{R},\;  p\in \mathbb{R}^n,
\]
with
\[
B_j(x)=\sum_{i=1}^n \partial_ia^{ij}(x), \quad x\in \Omega,\; 1\le j\le n.
\]

\vskip.1cm
As $b(\cdot,\cdot ,0)=0$ one can easily check that $Q_0$ satisfies to the assumptions of \cite[Theorem 15.12, page 382]{GT}. Let $f\in C^{2,\alpha}(\Gamma)$. Il light of the observation above we derive that the quasilinear BVP 
\begin{equation}
\left\{
\begin{array}{ll}
\mathrm{div}(a(u)A\nabla u)=0\quad &\mathrm{in}\; \Omega ,
\\
u_{|\Gamma}=f  ,
\end{array}
\right.
\end{equation}
admits a solution $u_a=u_a(f)\in C^{2,\alpha}(\overline{\Omega})$. The uniqueness of solutions of \eqref{c3} holds from \cite[Theorem 10.7, page 268]{GT} applied to $Q$.

\vskip.1cm
Also, as
\[
a(z)p\cdot A(x)p\ge \varkappa \kappa^{-1}|p|^2
\] 
we infer from \cite[Theorem 10.9, page 272]{GT} that
\begin{equation}\label{c4}
\max_{\overline{\Omega}}|u_a(f)|\le \varrho\max_{\Gamma}|f|,
\end{equation}
where $\varrho=\varrho(n,|\Omega|)$.

\vskip.1cm
On the other hand according to \cite[Theorem 6.8, page 100]{GT}, for each $f\in C^{2,\alpha}(\Gamma)$, there exists a unique $\mathscr{E}f\in C^{2,\alpha}(\overline{\Omega})$ satisfying
\[
\Delta \mathscr{E}f=0\; \mathrm{in}\; \Omega ,\quad \mathscr{E}f_{|\Gamma}=f,
\]
and from \cite[Theorem 6.6, page 98]{GT} we have
\begin{equation}\label{ee}
\|\mathscr{E}f\|_{C^{2,\alpha}(\overline{\Omega})}\le \mathbf{c}\|f\|_{C^{2,\alpha}(\Gamma)},
\end{equation}
where $\mathbf{c}=\mathbf{c}(\Omega,\alpha)>0$.

\vskip.1cm
Assume that $a$ satisfies (\textbf{a1}) and (\textbf{a2}) and let $f\in C^{2,\alpha}(\Gamma)$. Then straightforward computations show that $v=u_a(f)-\mathscr{E}f$ is the solution of the BVP
\begin{equation}\label{c5}
\left\{
\begin{array}{ll}
-\mathrm{div}(a(u_a(f))A\nabla v)=\mathrm{div}(a(u_a(f))A\nabla \mathscr{E}f) \quad &\mathrm{in}\; \Omega ,
\\
v_{|\Gamma}=0  .
\end{array}
\right.
\end{equation}
Multiplying the first equation of \eqref{c5} by $v$ and integrating over $\Omega$. We then obtain from Green's formula
\[
\int_\Omega a(u_a(f))A\nabla v\cdot \nabla vdx=-\int_\Omega a(u_a(f))A\nabla \mathscr{E}f\cdot \nabla vdx.
\]

Set
\[
\mathcal{B}_m=\{ f\in C^{2,\alpha}(\Gamma);\; \max_\Gamma |f|< m\}.
\]

If $f\in \mathcal{B}_m$ then the last identity together with \eqref{c4} yield 
\[
\varkappa \kappa^{-1} \|\nabla v\|_{L^2(\Omega)}^2\le \kappa \mu (m)\|\nabla \mathscr{E}f\|_{L^2(\Omega )}\|\nabla v\|_{L^2(\Omega )},
\]
which, combined with the fact of $w\in H^1(\Omega)\mapsto \|\nabla w\|_{L^2(\Omega)}$ defines an equivalent norm on $H_0^1(\Omega)$, implies
\[
\|v\|_{H^1(\Omega)}\le \aleph\mu (m)\|f\|_{C^{2,\alpha}(\Gamma)}.
\]
Here and henceforth, $\aleph =\aleph(n,\Omega, \kappa,\varkappa,\alpha)>0$ is a generic constant. 

Whence 
\begin{equation}\label{c6}
\|u_a(f)\|_{H^1(\Omega)}\le \aleph\mu (m)\|f\|_{C^{2,\alpha}(\Gamma)}.
\end{equation}

We endow $H^{1/2}(\Gamma)$ with the quotient norm
\[
\|\varphi\|_{H^{1/2}(\Gamma)}=\min\left\{ \|v\|_{H^1(\Omega)};\; v\in \dot{\varphi}\right\},\quad \varphi \in H^{1/2}(\Gamma),
\]
where 
\[
\dot{\varphi}=\left\{v\in H^1(\Omega);\; v_{|\Gamma}=\varphi\right\}.
\]

For each $\psi \in H^{-1/2}(\Gamma)$ we define $\chi\psi$ by 
\[
\langle \chi \psi ,\varphi\rangle_{1/2} = \langle \psi ,\chi \varphi\rangle_{1/2},\quad \varphi\in H^{1/2}(\Gamma),
\]
where $\langle \cdot,\cdot \rangle_{1/2}$ is the duality pairing between $H^{1/2}(\Gamma)$ and its dual $H^{-1/2}(\Gamma)$.

\vskip.1cm
It is not difficult to check that $\chi \psi\in H^{-1/2}(\Gamma)$, $\mathrm{supp}(\chi \psi)\subset \Gamma_1$ and the following identity holds
\begin{equation}\label{ui}
\langle \chi \psi ,\varphi\rangle_{1/2} = \langle \psi , \varphi\rangle_{1/2},\quad \varphi\in H_{\Gamma_0}^{1/2}(\Gamma).
\end{equation}
This identity will be very useful in the sequel.

\vskip.1cm
Let $\varphi \in H^{1/2}(\Gamma )$ and $v\in \dot{\varphi}$. Appying Green's formula, we get
\[
\int_\Gamma a(f)A\nabla u_a(f)\cdot \nu \varphi ds =\int_\Omega a (u_a(f))A\nabla u_a(f)\cdot \nabla v dx,
\]
where $\nu$ is the unit exterior normal vector field on $\Gamma$.

\vskip.1cm
Using that $u_a(f)$ is the solution of the BVP \eqref{c3}, we easily check that the right hand side of the  above identity is independent of $v$, $v\in \dot{\varphi}$.

\vskip.1cm
This identity suggests to define the Dirichlet-to-Neumann map 
\[
\Lambda_a:C^{2,\alpha}(\Gamma)\rightarrow H^{-1/2}(\Gamma), 
\]
associated to $a$, by the formula
\[
\langle \Lambda_a(f),\varphi \rangle_{1/2}=\int_\Omega a (u_a(f))A\nabla u_a(f)\cdot \nabla v dx,\quad \varphi\in H^{1/2}(\Gamma),\; v\in \dot{\varphi}.
\]
Using \eqref{c6}, we get 
\begin{equation}\label{c7}
\|\Lambda_a(f) \|_{H^{-1/2}(\Gamma)}\le \aleph\mu (m)^2\|f\|_{C^{2,\alpha}(\Gamma)},\quad f\in \mathcal{B}_m.
\end{equation}

\subsection{Differentiability properties}\label{sub2.2}

We need a gradient bound for the solution of the BVP \eqref{c3}. To this end we set 
\[
\mathcal{B}_m^+ =\{f\in \mathcal{B}_m  ;\; \|f\|_{C^{2,\alpha}(\Gamma)}< \mathbf{c}^{-1}m\} ,\quad m>0,
\]
where $\mathbf{c}$ is the constant in \eqref{ee}.

\vskip.1cm
Fix $f\in \mathcal{B}_m^+$ and $a\in \mathscr{A}$. We apply \cite[Theorem 15.9, page 380]{GT} with $\varphi=\mathscr{E}f/\|\mathscr{E}f\|_{C^2(\overline{\Omega})}$ and  $u=u_a(f)/\|\mathscr{E}f\|_{C^2(\overline{\Omega})}$ which is the solution of \eqref{c3}  when $a(z)$ is substituted by $a(\|\mathscr{E}f\|_{C^2(\overline{\Omega})}z)$. We obtain
\begin{equation}\label{ge}
\max_{\overline{\Omega}}|\nabla u_a(f)|\le C^0_m\|f\|_{C^{2,\alpha}(\Gamma)}.
\end{equation}

\vskip.1cm
Next, we establish that $\Lambda_a$, $a\in \mathscr{A}$, is Fr\'echet differentiable in a neighborhood of the origin. For $\eta >0$ define 
\[
\mathcal{B}_m^\eta =\{f\in \mathcal{B}_m^+\cap C_{\Gamma_0}^{2,\alpha}(\Gamma) ;\; \|f\|_{C^{2,\alpha}(\Gamma)}<\eta\} .
\]

\begin{lemma}\label{lemmac1}
Let $m>0$. There exists $\eta_m =\eta_m (n,\Omega, \kappa ,\varkappa,\alpha, m, \mu(m),\gamma(m))>0$ such that for each $a\in \mathscr{A}$ we have
\[
\|u_a(f)-u_a(g)\|_{H^1(\Omega)}\le C_m\|f-g\|_{C^{2,\alpha}(\Gamma)},\quad f,g\in \mathcal{B}_m^{\eta_m},
\]
\end{lemma}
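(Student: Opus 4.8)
The plan is to linearize by subtracting the two equations and to treat the difference as the solution of a linear divergence‑form equation whose zeroth‑order coupling is made small through the gradient bound \eqref{ge} together with the smallness encoded in $\mathcal{B}_m^{\eta_m}$. Fix $a\in\mathscr{A}$ and $f,g\in\mathcal{B}_m^{\eta_m}$, with $\eta_m$ to be pinned down at the very end, and set $u=u_a(f)$, $w=u_a(g)$, $\phi=u-w$, so that $\phi_{|\Gamma}=f-g$. Subtracting $\mathrm{div}(a(u)A\nabla u)=0$ and $\mathrm{div}(a(w)A\nabla w)=0$ and using the algebraic identity $a(u)A\nabla u-a(w)A\nabla w=a(u)A\nabla\phi+(a(u)-a(w))A\nabla w$, I find that $\phi$ solves
\[
\mathrm{div}(a(u)A\nabla\phi)=-\mathrm{div}\big((a(u)-a(w))A\nabla w\big)\quad\mathrm{in}\;\Omega .
\]
Writing $a(u)-a(w)=\big(\int_0^1 a'((1-s)w+su)\,ds\big)\phi$ and recalling from \eqref{c4} that $|u|,|w|\le\varrho m$ on $\overline{\Omega}$, assumption $(\textbf{a3})$ and the monotonicity of $\gamma$ give the pointwise bound $|a(u)-a(w)|\le\gamma(m)|\phi|$.

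Next I would introduce the harmonic extension $F=\mathscr{E}(f-g)$ and put $v=\phi-F\in H_0^1(\Omega)$. Multiplying the equation for $\phi$ by $v$ and integrating by parts (the boundary term vanishes since $v_{|\Gamma}=0$) yields the energy identity
\[
\int_\Omega a(u)A\nabla v\cdot\nabla v\,dx=-\int_\Omega a(u)A\nabla F\cdot\nabla v\,dx-\int_\Omega (a(u)-a(w))A\nabla w\cdot\nabla v\,dx .
\]
By \eqref{c1} and $a\ge\varkappa$ the left‑hand side is at least $\varkappa\kappa^{-1}\|\nabla v\|_{L^2(\Omega)}^2$. On the right I bound $a(u)\le\mu(m)$ through $(\textbf{a2})$ and \eqref{c4}, and I use $|a(u)-a(w)|\le\gamma(m)|v+F|$ together with the gradient bound \eqref{ge}, which gives $\|\nabla w\|_{L^\infty(\Omega)}\le C_m^0\|g\|_{C^{2,\alpha}(\Gamma)}\le C_m^0\eta_m$. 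Combining Cauchy--Schwarz, the Poincar\'e inequality on $H_0^1(\Omega)$ with constant $C_P$ (to control $\|v\|_{L^2(\Omega)}$ by $\|\nabla v\|_{L^2(\Omega)}$) and \eqref{ee} (to control $\|F\|_{H^1(\Omega)}$ by $\aleph\|f-g\|_{C^{2,\alpha}(\Gamma)}$), the last term on the right is dominated by $\gamma(m)\kappa C_m^0\eta_m\big(C_P\|\nabla v\|_{L^2(\Omega)}^2+\|F\|_{L^2(\Omega)}\|\nabla v\|_{L^2(\Omega)}\big)$.

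The decisive step is to choose $\eta_m$ small enough, depending only on $n,\Omega,\kappa,\varkappa,\alpha,m,\mu(m),\gamma(m)$ through $C_m^0$ and $C_P$, so that the coefficient $\gamma(m)\kappa C_m^0\eta_m C_P$ of $\|\nabla v\|_{L^2(\Omega)}^2$ is at most $\tfrac12\varkappa\kappa^{-1}$; this is precisely where the gradient bound \eqref{ge} and the smallness built into $\mathcal{B}_m^{\eta_m}$ enter, and I expect it to be the main obstacle, since the linearization produces exactly this quadratic‑in‑$\nabla v$ term that cannot be absorbed otherwise. After moving that term to the left, every remaining term is linear in $\|\nabla v\|_{L^2(\Omega)}$ and controlled by $\|f-g\|_{C^{2,\alpha}(\Gamma)}$, whence $\|\nabla v\|_{L^2(\Omega)}\le C_m\|f-g\|_{C^{2,\alpha}(\Gamma)}$. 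Finally, writing $\phi=v+F$ and using the Poincar\'e inequality once more together with \eqref{ee}, I obtain $\|u_a(f)-u_a(g)\|_{H^1(\Omega)}=\|\phi\|_{H^1(\Omega)}\le C_m\|f-g\|_{C^{2,\alpha}(\Gamma)}$, which is the claimed estimate; the independence of $C_m$ and $\eta_m$ from $a\in\mathscr{A}$ is manifest, as every constant above depends on $a$ only through $\mu(m)$ and $\gamma(m)$.
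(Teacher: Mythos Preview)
Your proof is correct and follows essentially the same route as the paper: subtract the two quasilinear equations, split the difference as $\mathscr{E}(f-g)$ plus a piece in $H_0^1(\Omega)$, test against the latter, and absorb the lower-order term by combining the gradient bound \eqref{ge} with the smallness $\|g\|_{C^{2,\alpha}(\Gamma)}<\eta_m$ and Poincar\'e's inequality. The only cosmetic difference is that the paper freezes the principal coefficient at $a(u_a(g))$ and puts $\nabla u_a(f)$ in the source term, whereas you freeze at $a(u_a(f))$ and put $\nabla u_a(g)$ in the source; by symmetry in $f$ and $g$ this changes nothing.
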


\begin{proof}
Let $\eta >0$ to be determined later. Pick $f,g\in \mathcal{B}_m^\eta$ and set $h=g-f$. Let $\sigma =a(u_a(g))$ and
\[
p(x)=\int_0^1a'(u_a(f)(x)+t[u_a(g)(x)-u_a(f)(x)])dt,\quad x\in \Omega.
\]
It is then straightforward to check that $u=u_a(g)-u_a(f)$ is the solution of the BVP
\[
\left\{
\begin{array}{ll}
-\mathrm{div}(\sigma A\nabla u)=\mathrm{div}(puA\nabla u_a(f)) \quad &\mathrm{in}\; \Omega ,
\\
u_{|\Gamma}=h  .
\end{array}
\right.
\]
 
We split $u$ into two terms $u=\mathscr{E}h+v$, where $v$ is the solution of the BVP
\[
\left\{
\begin{array}{ll}
-\mathrm{div}(\sigma A\nabla v)-\mathrm{div}(pvA\nabla u_a(f))= \mathrm{div}(F)\quad &\mathrm{in}\; \Omega ,
\\
v_{|\Gamma}=0  ,
\end{array}
\right.
\]
with 
\[
F=\sigma A\nabla \mathscr{E}h+q\mathscr{E}hA\nabla u_a(f).
\]
Applying Green's formula, we find
\[
\int_\Omega \sigma A\nabla v\cdot \nabla v+\int_\Omega qvA\nabla u_a(f)\cdot \nabla v=-\int_\Omega F\cdot \nabla v.
\]

From  \eqref{ge} and Poincar\'e's inequality we derive
\[
\varkappa \kappa^{-1}\|\nabla v\|^2_{L^2(\Omega)}-C_m^0\eta \|\nabla v\|_{L^2(\Omega)}^2\le C_m\|\nabla v\|_{L^2(\Omega)}\|h\|_{C^{2,\alpha}(\Gamma)}.
\]

If $\eta=\eta_m$ is chosen sufficiently small is such a way that 
\[
\varkappa \kappa^{-1}-C_m^0\eta \ge \varkappa \kappa^{-1}/2,
\]
 then we obtain
\[
\|\nabla v\|_{L^2(\Omega)}\le C_m\|h\|_{C^{2,\alpha}(\Gamma)},
\]
from which the expected inequality follows readily.
\end{proof}

In the sequel $\eta_m$, $m>0$, will denote the constant in Lemma \ref{lemmac1}.

\begin{lemma}\label{lemmac2}
Pick $a\in \mathscr{A}$ and $m>0$. Then
\[
\|\Lambda_a (f)-\Lambda_a(g)\|_{H^{-1/2}(\Gamma)}\le C_m\|f-g\|_{C^{2,\alpha}(\Gamma)},\quad f,g\in \mathcal{B}_m^{\eta_m}.
\]
\end{lemma}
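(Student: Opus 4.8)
The plan is to argue directly from the variational definition of $\Lambda_a$ and to reduce the whole estimate to the $H^1$-stability bound of Lemma \ref{lemmac1}. Writing $u_f=u_a(f)$ and $u_g=u_a(g)$, for $\varphi\in H^{1/2}(\Gamma)$ and any $v\in\dot\varphi$ the definition of $\Lambda_a$ gives
\[
\langle \Lambda_a(f)-\Lambda_a(g),\varphi\rangle_{1/2}=\int_\Omega \big[a(u_f)A\nabla u_f-a(u_g)A\nabla u_g\big]\cdot\nabla v\,dx.
\]
First I would split the integrand by the elementary identity
\[
a(u_f)A\nabla u_f-a(u_g)A\nabla u_g=a(u_f)A\nabla(u_f-u_g)+\big[a(u_f)-a(u_g)\big]A\nabla u_g,
\]
so the problem decouples into a \emph{gradient-difference} term and a \emph{coefficient-difference} term, each of which will be handled by Cauchy--Schwarz after suitable $L^\infty$ control of the prefactors.

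For the gradient-difference term I would use that $f\in\mathcal{B}_m$ together with the maximum principle \eqref{c4} forces $|u_f|\le\varrho m$, whence $a(u_f)\le\mu(m)$ by $(\textbf{a2})$; combined with \eqref{c1.1} this yields $|a(u_f)A\nabla(u_f-u_g)|\le\kappa\mu(m)\,|\nabla(u_f-u_g)|$. Cauchy--Schwarz and Lemma \ref{lemmac1} then bound the corresponding integral by $C_m\|f-g\|_{C^{2,\alpha}(\Gamma)}\|\nabla v\|_{L^2(\Omega)}$.

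For the coefficient-difference term I would apply the mean value theorem and $(\textbf{a3})$: since the intermediate values also satisfy $|\cdot|\le\varrho m$, we get $|a(u_f)-a(u_g)|\le\gamma(m)\,|u_f-u_g|$, while the gradient bound \eqref{ge} applied to $g\in\mathcal{B}_m^{\eta_m}\subset\mathcal{B}_m^+$ gives $\max_{\overline\Omega}|\nabla u_g|\le C^0_m\|g\|_{C^{2,\alpha}(\Gamma)}\le C_m$. Hence the integrand is pointwise dominated by $\kappa\gamma(m)C_m\,|u_f-u_g|\,|\nabla v|$, and Cauchy--Schwarz together with the $L^2$-part of Lemma \ref{lemmac1} again produces a bound $C_m\|f-g\|_{C^{2,\alpha}(\Gamma)}\|\nabla v\|_{L^2(\Omega)}$.

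Adding the two estimates, and recalling that the integral is independent of the representative $v\in\dot\varphi$, I would choose $v$ to realize the quotient norm, so that $\|\nabla v\|_{L^2(\Omega)}\le\|v\|_{H^1(\Omega)}=\|\varphi\|_{H^{1/2}(\Gamma)}$. This gives
\[
\big|\langle \Lambda_a(f)-\Lambda_a(g),\varphi\rangle_{1/2}\big|\le C_m\|f-g\|_{C^{2,\alpha}(\Gamma)}\,\|\varphi\|_{H^{1/2}(\Gamma)},
\]
and taking the supremum over $\|\varphi\|_{H^{1/2}(\Gamma)}\le 1$ yields the claim. The only point requiring care -- though not a genuine obstacle -- is to check that all the $L^\infty$-bounds on $a(u_f)$, on $a'$, and on $\nabla u_g$ are uniform over $\mathcal{B}_m^{\eta_m}$ and depend only on the data absorbed into $C_m$; this is precisely what $(\textbf{a2})$, $(\textbf{a3})$, \eqref{c4} and \eqref{ge} supply.
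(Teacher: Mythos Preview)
Your proof is correct and follows essentially the same route as the paper: the paper splits $\langle \Lambda_a(g)-\Lambda_a(f),\varphi\rangle_{1/2}$ into the two integrals $I_1=\int_\Omega [a(u_a(g))-a(u_a(f))]A\nabla u_a(g)\cdot\nabla v$ and $I_2=\int_\Omega a(u_a(f))A[\nabla u_a(g)-\nabla u_a(f)]\cdot\nabla v$, bounds each by $C_m\|u_a(g)-u_a(f)\|_{H^1(\Omega)}\|v\|_{H^1(\Omega)}$ via the same ingredients you use (the $L^\infty$ bounds from $(\textbf{a2})$, $(\textbf{a3})$, \eqref{c4}, \eqref{ge}), and concludes with Lemma~\ref{lemmac1}. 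Your write-up is simply more explicit about the pointwise estimates, but the decomposition and the key inputs are identical.
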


\begin{proof}
Let $f,g\in \mathcal{B}_m^{\eta_m}$. For $\varphi\in H^{1/2}(\Gamma)$ and $v\in \dot{\varphi}$, we have
\[
\langle \Lambda_a(g)-\Lambda_a(f),\varphi \rangle_{1/2}= I_1+I_2,
\]
where
\begin{align*}
&I_1= \int_\Omega [a (u_a(g))-a(u_a(f))]A\nabla u_a(g)\cdot \nabla v dx,
\\
&I_2=\int_\Omega a (u_a(f))A[\nabla u_a(g)-\nabla u_a(f)]\cdot \nabla vdx.
\end{align*}
We can proceed similarly to the proof of Lemma \ref{lemmac1} to derive that
\[
|I_j|\le C_m\|u_a(g)-u_a(f)\|_{H^1(\Omega)}\|v\|_{H^1(\Omega)},\quad j=1,2.
\]
The expected inequality follows easily by using Lemma \ref{lemmac1}.
\end{proof}

Let $f\in \mathcal{B}_m^{\eta_m}$. Similarly to the calculations we carried out in the proof of Lemma \ref{lemmac1}, we show that the bilinear continuous form
\[
\mathfrak{b}(v,w)=\int_\Omega [a(u_a(f))A\nabla v+a'(u_a(f))vA\nabla u_a(f)] \cdot \nabla wdx,\quad v,w\in H_0^1(\Omega),
\]
is coercive. In light of Lemma \ref{lemmaap2}, we obtain that the BVP
\begin{equation}\label{c8}
\left\{
\begin{array}{ll}
\mathrm{div}[a(u_a(f))A\nabla v+a'(u_a(f))v\nabla u_a(f))]= 0\quad \mathrm{in}\; \Omega ,
\\
v_{|\Gamma}=h  ,
\end{array}
\right.
\end{equation}
admits a unique weak solution $v_a=v_a(f,h)\in H^1(\Omega)$ satisfying
\begin{equation}\label{c9.0}
\|v_a(f,h)\|_{H^1(\Omega)}\le C_m\|h\|_{H^{1/2}(\Gamma)}
\end{equation}
and hence
\begin{equation}\label{c9}
\|v_a(f,h)\|_{H^1(\Omega)}\le C_m\|h\|_{C^{2,\alpha}(\Gamma)}.
\end{equation}
We refer to Appendix \ref{appendix} for the exact definition of weak solutions.

\vskip.1cm
Next, pick $\epsilon >0$ such that $f+h\in \mathcal{B}_m^{\eta_m}$ for each $h\in C_{\Gamma_0}^{2,\alpha}(\Gamma)$ satisfying $\|h\|_{C^{2,\alpha}(\Gamma)}<\epsilon$. Set then
\[
w=u_a(f+h)-u_a(f)-v_a(f,h).
\]
Simple computations show that $w$ is the weak solution of the BVP
\[
\left\{
\begin{array}{ll}
\mathrm{div}[a(u_a(f))A\nabla w+a'(u_a(f))w\nabla u_a(f))]= \mathrm{div}(F)\quad &\mathrm{in}\; \Omega ,
\\
w_{|\Gamma}=0 ,
\end{array}
\right.
\]
with 
\begin{align*}
F&= a'(u_a(f))[u_a(f+h))-u_a(f)]\nabla u_a(f)
\\
&\hskip 4cm -[a(u_a(f+h))-a(u_a(f))]\nabla u_a(f+h)
\\
&=\{a'(u_a(f))[u_a(f+h))-u_a(f)]-[a(u_a(f+h))-a(u_a(f))]\}\nabla u_a(f)
\\
&\hskip 3.5cm+[a(u_a(f+h))-a(u_a(f))][\nabla u_a(f)-\nabla u_a(f+h)].
\end{align*}
In particular, we have
\begin{equation}\label{c10}
\mathfrak{b}(w,w)=\int_\Omega F\cdot \nabla w.
\end{equation}
Using that
\begin{align*}
a'(&u_a(f))[u_a(f+h))-u_a(f)]-[a(u_a(f+h))-a(u_a(f))]
\\
&=[u_a(f+h))-u_a(f)]\int_0^1[a'(u_a(f))-a'(u_a(f)+t(u_a(f+h)-u_a(f))]dt,
\end{align*}
and  the uniform continuity of $a'$ in $[-\varrho m,\varrho m]$, we obtain
\[
\|a'(u_a(f))[u_a(f+h))-u_a(f)]-[a(u_a(f+h))-a(u_a(f))]\|_{L^\infty(\Omega)}=o(\|h\|_{C^{2,\alpha}(\Gamma)}).
\]
On the other hand similar estimates as above give
\[
\|[a(u_a(f+h))-a(u_a(f))][\nabla u_a(f)-\nabla u_a(f+h)]\|_{H^1(\Omega)}\le C_m\|h\|_{C^{2,\alpha}(\Gamma)}^2.
\]

The last two inequalities together with \eqref{c10} yield
\[
\|w\|_{H^1(\Omega)}= o(\|h\|_{C^{2,\alpha}(\Gamma)}).
\]

In other words we proved that $f\in \mathcal{B}_m^{\eta_m}\mapsto u_a(f)\in H^1(\Omega)$ is Fr\'echet differentiable with 
\[
du_a(f)(h)=v_a(f,h),\quad f\in \mathcal{B}_m^{\eta_m},\; h\in C_{\Gamma_0}^{2,\alpha}(\Gamma).
\]
Using the definition of $\Lambda_a$ we can then state the following result.

\begin{proposition}\label{propositionc1}
For each $m>0$, the mapping \[f\in \mathcal{B}_m^{\eta_m}\mapsto \Lambda_a(f)\in H^{-1/2}(\Gamma)\] is Fr\'echet differentiable with 
\[
\langle d\Lambda_a(f)(h),\varphi \rangle_{1/2}=\int_\Omega [a(u_a(f))A\nabla v_a(f,h)+a'(u_a(f))v_a(f,h)A\nabla u_a(f)]\cdot \nabla v dx,
\]
$f\in \mathcal{B}_m^{\eta_m}$, $h\in C_{\Gamma_0}^{2,\alpha}(\Gamma)$, $\varphi\in H^{1/2}(\Gamma)$ and $v\in \dot{\varphi}$.
\end{proposition}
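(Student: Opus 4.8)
The plan is to obtain the formula by differentiating through the definition of $\Lambda_a$, regarding $\Lambda_a$ as the composition of the already-differentiated solution map $f\mapsto u_a(f)$ with the ``flux functional'' $G$ given by $\langle G(w),\varphi\rangle_{1/2}=\int_\Omega a(w)A\nabla w\cdot\nabla v\,dx$ for $\varphi\in H^{1/2}(\Gamma)$ and $v\in\dot\varphi$. Since the right-hand side of the claimed identity is precisely $dG(u_a(f))$ evaluated at $v_a(f,h)=du_a(f)(h)$, it suffices in essence to establish differentiability of $G$ and invoke the chain rule. For safety in arbitrary dimension $n\ge 3$ I would carry this out by a direct remainder estimate rather than abstractly, keeping the increment $h$ measured in $C^{2,\alpha}(\Gamma)$ so that the corresponding increments of $u_a$ enjoy simultaneous $L^\infty$ and $H^1$ control.

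First I would check that the candidate for $d\Lambda_a(f)$ is a bounded linear operator. Linearity in $h$ is inherited from the linearity of $h\mapsto v_a(f,h)$, the solution of the linear BVP \eqref{c8}; boundedness follows by estimating the two terms of the integrand using \eqref{c9} together with the $L^\infty$ bounds $a(u_a(f))\le\mu(m)$ and $|a'(u_a(f))|\le\gamma(m)$ on the range $[-\varrho m,\varrho m]$ and, crucially, the gradient bound \eqref{ge}, which controls $\int_\Omega |v_a(f,h)|\,|\nabla u_a(f)|\,|\nabla v|$ by $\|\nabla u_a(f)\|_{L^\infty}\|v_a(f,h)\|_{L^2}\|\nabla v\|_{L^2}$ without appealing to any Sobolev embedding.

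The heart of the proof is the remainder estimate. Writing $u=u_a(f)$, $\tilde u=u_a(f+h)$, $v_h=v_a(f,h)$ and $w=\tilde u-u-v_h$, I would expand
\[
\langle\Lambda_a(f+h)-\Lambda_a(f)-d\Lambda_a(f)(h),\varphi\rangle_{1/2}
\]
into four pieces, namely the integrals against $\nabla v$ of $a(u)A\nabla w$, of $[a(\tilde u)-a(u)-a'(u)v_h]A\nabla u$, of $[a(\tilde u)-a(u)]A\nabla v_h$, and of $[a(\tilde u)-a(u)]A\nabla w$. Each is shown to be $o(\|h\|_{C^{2,\alpha}(\Gamma)})\|v\|_{H^1(\Omega)}$: the first and fourth use $\|\nabla w\|_{L^2}=o(\|h\|)$, which is exactly the Fr\'echet differentiability of $u_a$ already established; the third uses $\|a(\tilde u)-a(u)\|_{L^\infty}\le C_m\|h\|$ combined with \eqref{c9}, giving an $O(\|h\|^2)$ bound; and the second is split as $a'(u)w+[a(\tilde u)-a(u)-a'(u)(\tilde u-u)]$, where the first summand is controlled by $\|w\|_{L^2}$ and the second is $o(\|h\|)$ in $L^\infty$ by uniform continuity of $a'$ on $[-\varrho m,\varrho m]$, both then paired against the uniformly bounded $A\nabla u$. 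Taking the infimum over $v\in\dot\varphi$ converts these into an $H^{-1/2}(\Gamma)$ estimate and yields Fr\'echet differentiability with the stated derivative.

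The main obstacle I anticipate is the one that already appears implicitly in the differentiability proof for $u_a$: controlling the genuinely nonlinear remainder terms in high dimension. The gradient bound \eqref{ge} is what makes this work, since it lets me peel off $\nabla u_a(f)$ in $L^\infty$ so that every remainder term reduces to an $L^2\times L^2$ pairing governed by $\|w\|_{H^1}=o(\|h\|)$ and by the Lipschitz-in-$L^\infty$ dependence of $u_a(f)$ on $f$, rather than to higher Lebesgue pairings that would fail for $n\ge 5$. The only point I must import (and which the preceding argument for $u_a$ tacitly uses) is the bound $\|u_a(f+h)-u_a(f)\|_{L^\infty}\le C_m\|h\|_{C^{2,\alpha}(\Gamma)}$, which I would derive from the same BVP for the difference as in the proof of Lemma \ref{lemmac1} together with a maximum-principle estimate.
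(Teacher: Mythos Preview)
Your proposal is correct and follows the same line as the paper. The paper itself does all the real work in the paragraphs \emph{preceding} the proposition, establishing that $f\mapsto u_a(f)$ is Fr\'echet differentiable with $du_a(f)(h)=v_a(f,h)$ and $\|w\|_{H^1(\Omega)}=o(\|h\|_{C^{2,\alpha}(\Gamma)})$, and then dispatches the proposition in one sentence (``Using the definition of $\Lambda_a$ we can then state the following result''); your four-term decomposition of the remainder is precisely the explicit chain-rule computation this sentence is gesturing at, and the $L^\infty$ Lipschitz bound on $u_a(f+h)-u_a(f)$ that you flag as an import is also used implicitly by the paper in its estimate of $\|a'(u_a(f))[u_a(f+h)-u_a(f)]-[a(u_a(f+h))-a(u_a(f))]\|_{L^\infty(\Omega)}$.
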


The fact that  $d\Lambda_a(f)$, $f\in \mathcal{B}_m^{\eta_m}$,  is extended to a bounded linear map from $H_{\Gamma_0}^{1/2}(\Gamma)$ into $H^{-1/2}(\Gamma)$ is  immediate from \eqref{c9.0}. 

\begin{remark}\label{remark1}
{\rm
If the assumption (\textbf{a1}) is substituted by the following one

\vskip.1cm
(\textbf{a1'})  $a\in C^{1,1}(\mathbb{R})$, $a\ge \varkappa $,

\vskip.1cm
\noindent
then one can prove that $f\in \mathcal{B}_m^{\eta_m}\mapsto v_a(f,\cdot)\in \mathscr{B}(C_{\Gamma_0}^{2,\alpha}(\Gamma),H^1(\Omega))$ is continuous. In that case $f\in \mathcal{B}_m^{\eta_m}\mapsto \Lambda_a(f)\in H^{-1/2}(\Gamma)$ is continuously  Fr\'echet differentiable.
}
\end{remark}

\section{Proof of the main result}\label{section3}

As we already mentioned we give a  proof based on an adaptation of  \cite[proof of (1.2) of Theorem 1.1]{Ch1} combined with a localization argument borrowed from \cite{Ki}.

\subsection{Special solutions}

We construct in a general setting special solutions of a divergence form operator vanishing outside $\Gamma_0$. To this end, let $\mathfrak{A}=(\mathfrak{a}^{ij})\in C^{1,\alpha}(\mathbb{R}^n,\mathbb{R}^{n\times n})$ satisfying 
\[
\lambda^{-1} |\xi|^2\le \mathfrak{A}(x)\xi \cdot \xi ,\quad x\in \mathbb{R}^n ,\; \xi \in \mathbb{R}^n,
\]
and
\[
\max_{1\le i,j\le n}\|\mathfrak{a}^{ij}\|_{C^{1,\alpha}(\mathbb{R}^n)}\le \lambda ,
\]
for some constant $\lambda >1$.

\vskip.1cm
Recall that  the canonical parametrix for the operator $\mathrm{div}(\mathfrak{A}\nabla \cdot\, )$ is given by
\[
H(x,y)=\frac{[\mathfrak{A}^{-1}(y)(x-y)\cdot (x-y)]^{(2-n)/2}}{(n-2)|\mathbb{S}^{n-1}[\mathrm{det}(\mathfrak{A}(y)]^{1/2}},\quad x,y\in \mathbb{R}^n,\; x\ne y.
\]

\begin{theorem}\label{theorema1}
$($\cite[Theorem 3, page 271]{Ka}$)$ 
Pick $\Omega_0\supset \Omega$. For each $y\in \overline{\Omega}_0$, there exists $u_y\in C^2(\overline{\Omega}_0\setminus \{y\})$ satisfying $\mathrm{div}(\mathfrak{A}\nabla u)=0$ in $\Omega_0\setminus \{y\}$,
\begin{align*}
&|u_y(x)-H(x,y)|\le C|x-y|^{2-n+\alpha},\quad x\in \overline{\Omega}_0\setminus \{y\},
\\
&|\nabla u_y(x)-\nabla H(x,y)|\le C|x-y|^{1-n+\alpha},\quad x\in \overline{\Omega}_0\setminus \{y\},
\end{align*}
where $C=C(n,\Omega_0 , \alpha ,\lambda)>0$.
\end{theorem}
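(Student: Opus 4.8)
\emph{The plan.} The statement is the classical construction of a local fundamental solution for a divergence-form operator with H\"older-continuous coefficients, and I would establish it by the \emph{Levi parametrix} method, i.e. by freezing the coefficients at the pole. Write $L=\mathrm{div}(\mathfrak{A}\nabla\cdot\,)$. The starting point is that $H(\cdot,y)$ is, up to a normalizing constant, the fundamental solution of the constant-coefficient operator $\mathrm{div}(\mathfrak{A}(y)\nabla\cdot\,)$ obtained by freezing $\mathfrak{A}$ at $y$, so that $\mathrm{div}_x(\mathfrak{A}(y)\nabla_xH(x,y))=\delta_y$ in the distributional sense (the sign/normalization being immaterial below). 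Consequently
\[
L_xH(x,y)=\delta_y+\omega(x,y),\qquad \omega(x,y)=\mathrm{div}_x\big([\mathfrak{A}(x)-\mathfrak{A}(y)]\nabla_xH(x,y)\big).
\]
Expanding $\omega$ and using $\mathfrak{A}\in C^{1,\alpha}$ together with the standard bounds $|\nabla_xH(x,y)|\le C|x-y|^{1-n}$ and $|\nabla_x^2H(x,y)|\le C|x-y|^{-n}$, one checks that $\omega$ is weakly singular, $|\omega(x,y)|\le C|x-y|^{1-n}$, hence locally integrable in $x$; the H\"older exponent $\alpha$ of the coefficients is what will ultimately dictate the gain in the correction term.

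\emph{Setting up and solving the integral equation.} I would look for
\[
u_y(x)=H(x,y)+\int_{\Omega_0}H(x,z)\phi(z,y)\,dz
\]
and choose the density $\phi$ so that $L_xu_y=0$ for $x\ne y$. Applying $L_x$ under the integral and using $L_xH(x,z)=\delta_z+\omega(x,z)$ yields the second-kind integral equation
\[
\phi(x,y)+\int_{\Omega_0}\omega(x,z)\phi(z,y)\,dz=-\omega(x,y).
\]
This I would solve by the Neumann series $\phi=\sum_{k\ge1}(-1)^k\omega_k$, with $\omega_1=\omega$ and $\omega_{k+1}(x,y)=\int_{\Omega_0}\omega(x,z)\omega_k(z,y)\,dz$. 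The decisive input is the composition rule for weakly singular kernels: composing kernels of orders $-n+a$ and $-n+b$ (with $0<a,b$ and $a+b<n$) produces a kernel of order $-n+a+b$, and a bounded (even H\"older) kernel once $a+b\ge n$. Thus the iterated kernels $\omega_k$ become progressively less singular, the series converges, and $\phi(\cdot,y)$ inherits the bound $|\phi(z,y)|\le C|z-y|^{1-n}$ while being H\"older off the diagonal.

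\emph{Asymptotics.} Writing $w=u_y-H(\cdot,y)=\int_{\Omega_0}H(\cdot,z)\phi(z,y)\,dz$, the value estimate follows from the convolution bound applied to $|H(x,z)|\le C|x-z|^{2-n}$ and $|\phi(z,y)|\le C|z-y|^{1-n}$, which produces a singularity strictly milder than that of $H$ and in particular $|w(x)|\le C|x-y|^{2-n+\alpha}$ near $y$. For the gradient, since $\nabla_xH(x,z)$ is of order $1-n$ it remains locally integrable, so differentiation under the integral sign is legitimate and a second convolution estimate gives $|\nabla w(x)|\le C|x-y|^{1-n+\alpha}$; these are the two claimed bounds. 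The regularity $u_y\in C^2(\overline{\Omega}_0\setminus\{y\})$ then follows from interior Schauder estimates for $Lu_y=0$ with $\mathfrak{a}^{ij}\in C^{1,\alpha}$ coefficients, equivalently from the H\"older continuity of $\phi$ and the smoothing of the volume potential.

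\emph{Main obstacle.} I expect the crux to be the estimates of the iterated kernels $\omega_k$: proving absolute convergence of the Levi series and controlling the exact weakly singular order, together with tracking the H\"older exponent so that the gain is precisely $\alpha$ in both asymptotic bounds. By contrast, once $\phi$ is controlled the value and gradient estimates are routine convolution bounds, the latter being unproblematic because $\nabla_xH$ is still integrable. This construction is, in any case, available in \cite{Ka}, which is why the result is quoted rather than reproved here.
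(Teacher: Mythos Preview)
The paper does not prove this theorem; it simply quotes it from \cite{Ka} and uses it as a black box. There is therefore no proof in the paper to compare your proposal against. Your outline of the Levi parametrix method is the correct and standard approach, and is precisely the content of the cited reference; as you yourself note in your final paragraph, this is why the result is quoted rather than reproved. One minor remark: with the $C^{1,\alpha}$ regularity assumed here (rather than merely $C^{0,\alpha}$), your bound $|\omega(x,y)|\le C|x-y|^{1-n}$ would in fact yield a correction of order $|x-y|^{3-n}$, stronger than the stated $|x-y|^{2-n+\alpha}$; the $\alpha$ in the statement reflects that Kalf's theorem is formulated for the minimal H\"older regularity on the principal part, and the paper is simply invoking that version.
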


Pick $x_0\in \Gamma_0$ and let $r_0>0$ sufficiently small in such a way that $B(x_0,r_0)\cap \Gamma\Subset \Gamma_0$. As $B(x_0,r_0) \setminus \overline{\Omega}$ contains a cone with a vertex at $x_0$, we find $\delta_0>0$ and a vector $\xi \in \mathbb{S}^{n-1}$ such that, for each $0<\delta \le \delta_0$, we have $y_\delta=x_0+\delta \xi \in B(x_0,r_0)\setminus \overline{\Omega}$ and $\mathrm{dist}(y_\delta,\overline{\Omega})\ge c\delta$, for some constant $c=c(\Omega)>0$. 

\vskip.1cm
In the sequel $\Omega_0=\Omega \cup B(x_0,r_0)$ and $u_\delta =u_{y_\delta}$, $0<\delta \le \delta_0$, where $u_{y_\delta}$ is given by Theorem \ref{theorema1}. Reducing $\delta_0$ if necessary, we may assume that
\begin{equation}\label{bd}
\mathrm{dist}(y_\delta,\partial \Omega_0)\ge r_0/2,\quad 0<\delta \le \delta_0.
\end{equation}

Let $P\in L^\infty(\mathbb{R}^n,\mathbb{R}^n)$ satisfying $\|P\|_{L^\infty(\mathbb{R}^n)}\le \lambda$ and consider on $H_0^1(\Omega)\times H_0^1(\Omega)$ the continuous bilinear forms
\begin{align*}
&\mathfrak{b}(u,v)=\int_\Omega [\mathfrak{A}\nabla u+uP]\cdot \nabla v,\quad u,v\in H_0^1(\Omega).
\\
&\mathfrak{b}^\ast (u,v)=\int_\Omega [\mathfrak{A}\nabla u\cdot \nabla v-vP\cdot \nabla u],\quad u,v\in H_0^1(\Omega).
\end{align*}
We assume that $\mathfrak{b}$ and $\mathfrak{b}^\ast$ are coercive: there exists $c_0>0$ such that 
\[
\mathfrak{b}(u,u)\ge c_0\|u\|_{H_0^1(\Omega)},\quad\mathfrak{b}^\ast(u,u)\ge c_0\|u\|_{H_0^1(\Omega)} \quad u\in H_0^1(\Omega).
\]

Let  $f\in H^{1/2}(\Gamma)$. From Lemma \ref{lemmaap2} and its proof  the BVP
\begin{equation}\label{a1}
\left\{
\begin{array}{ll}
\mathrm{div}(\mathfrak{A}\nabla u+uP)= 0\quad \mathrm{in}\; \Omega ,
\\
u_{|\Gamma}=f  ,
\end{array}
\right.
\end{equation}
has a unique weak solution $u(f)\in H^1(\Omega)$ satisfying
\begin{equation}\label{a2}
\|u(f)\|_{H^1(\Omega)}\le C\|f\|_{H^{1/2}(\Gamma)},
\end{equation}
where $C=C(n,\Omega ,\lambda, c_0)>0$.

\vskip.1cm
Similarly, the BVP
\begin{equation}\label{a1.1}
\left\{
\begin{array}{ll}
\mathrm{div}(\mathfrak{A}\nabla u^\ast)-P\cdot \nabla u^\ast= 0\quad \mathrm{in}\; \Omega ,
\\
u^\ast_{|\Gamma}=f  ,
\end{array}
\right.
\end{equation}
admits unique weak solution $u^\ast(f)\in H^1(\Omega)$ satisfying
\begin{equation}\label{a2.1}
\|u^\ast(f)\|_{H^1(\Omega)}\le C\|f\|_{H^{1/2}(\Gamma)},
\end{equation}
where $C$ is as in \eqref{a2}.

\vskip.1cm
On the other hand, using that the continuous bilinear form
\[
\mathfrak{b}_0(u,v)=\int_{\Omega_0}\mathfrak{A}\nabla u\cdot \nabla v,\quad u,v\in H_0^1(\Omega_0),
\]
is coercive, we derive that the BVP
\begin{equation}\label{a3}
\left\{
\begin{array}{ll}
\mathrm{div}(\mathfrak{A}\nabla v)= 0\quad \mathrm{in}\; \Omega_0 ,
\\
v_{|\partial \Omega_0}=u_\delta  ,
\end{array}
\right.
\end{equation}
admits a unique weak solution $v_\delta\in H^1(\Omega_0)$ satisfying
\begin{equation}\label{e1}
\|v_\delta\|_{H^1(\Omega_0)}\le C\|u_\delta\|_{H^{1/2}(\partial \Omega_0)},\quad 0<\delta\le \delta_0.
\end{equation}
where $C=C(n,\Omega_0 ,\lambda)>0$.

\begin{lemma}\label{lemmasp1}
We have
\begin{equation}\label{a4}
\|v_\delta\|_{H^{1/2}(\Gamma)}\le C, \quad 0<\delta\le \delta_0,
\end{equation}
where $C=C(n,\Omega,\alpha ,\lambda, x_0,r_0)>0$.
\end{lemma}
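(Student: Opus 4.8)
The plan is to reduce the claimed uniform bound on $\Gamma$ to a uniform bound of the Dirichlet datum $u_\delta$ on $\partial\Omega_0$, and then to extract the latter from the pointwise estimates of Theorem \ref{theorema1} combined with the separation property \eqref{bd}.

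First I would invoke the trace theorem. Since $\Omega\subset\Omega_0$ and $\Omega$ is $C^{2,\alpha}$, restricting $v_\delta$ to $\Omega$ and taking its trace gives
\[
\|v_\delta\|_{H^{1/2}(\Gamma)}\le C\|v_\delta\|_{H^1(\Omega)}\le C\|v_\delta\|_{H^1(\Omega_0)},
\]
with $C=C(n,\Omega)$. (For the part of $\Gamma$ lying in the interior of $\Omega_0$, $v_\delta$ is smooth there as a solution of the elliptic equation, so its trace is unambiguous.) Combining this with the energy estimate \eqref{e1} yields
\[
\|v_\delta\|_{H^{1/2}(\Gamma)}\le C\|u_\delta\|_{H^{1/2}(\partial\Omega_0)},\quad 0<\delta\le\delta_0,
\]
so the whole matter reduces to bounding $\|u_\delta\|_{H^{1/2}(\partial\Omega_0)}$ uniformly in $\delta$.

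Next I would estimate $u_\delta$ on $\partial\Omega_0$ directly. The crucial point is \eqref{bd}: since $\mathrm{dist}(y_\delta,\partial\Omega_0)\ge r_0/2$, every $x\in\partial\Omega_0$ satisfies $|x-y_\delta|\ge r_0/2$, and $u_\delta$ is of class $C^2$ in a full neighborhood of $\partial\Omega_0$ because the singularity $y_\delta$ stays interior to $\Omega_0$. Using the ellipticity of $\mathfrak{A}$ (hence of $\mathfrak{A}^{-1}$), the bound $\det\mathfrak{A}(y)\ge\lambda^{-n}$ and $\|\mathfrak{a}^{ij}\|_{C^{1,\alpha}}\le\lambda$, the explicit formula for the parametrix gives $|H(x,y_\delta)|\le C|x-y_\delta|^{2-n}$ and $|\nabla_xH(x,y_\delta)|\le C|x-y_\delta|^{1-n}$ for $x\in\partial\Omega_0$, with $C=C(n,\lambda)$. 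Feeding these together with the estimates of Theorem \ref{theorema1} and $|x-y_\delta|\ge r_0/2$ into the triangle inequality, I obtain
\[
\sup_{x\in\partial\Omega_0}\big(|u_\delta(x)|+|\nabla u_\delta(x)|\big)\le C,\quad 0<\delta\le\delta_0,
\]
with $C=C(n,\Omega_0,\alpha,\lambda,r_0)$. Thus $u_\delta$ is bounded in $W^{1,\infty}(\partial\Omega_0)$ uniformly in $\delta$, and since $\partial\Omega_0$ is a compact Lipschitz hypersurface the embedding $W^{1,\infty}(\partial\Omega_0)\hookrightarrow H^{1/2}(\partial\Omega_0)$ gives the sought uniform bound on $\|u_\delta\|_{H^{1/2}(\partial\Omega_0)}$, whence \eqref{a4}.

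The main obstacle, and the only genuinely delicate issue, is keeping every constant independent of $\delta$. This is exactly what \eqref{bd} secures: although $y_\delta\to x_0\in\Gamma$ as $\delta\to 0$ (so $u_\delta$ blows up near $x_0$ and is certainly not uniformly bounded on $\Gamma$ itself), the data are read off only on $\partial\Omega_0$, which remains at distance at least $r_0/2$ from the singularity for all admissible $\delta$. A secondary technical point is that $\partial\Omega_0=\partial(\Omega\cup B(x_0,r_0))$ is merely Lipschitz near $\partial B(x_0,r_0)\cap\Gamma$; this regularity is nonetheless enough for both the trace estimate on $\Omega$ and the embedding on $\partial\Omega_0$, so no additional smoothing of the domain is required.
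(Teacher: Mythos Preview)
Your proof is correct and follows essentially the same approach as the paper: reduce to a uniform bound on $\|u_\delta\|_{H^{1/2}(\partial\Omega_0)}$ via the trace theorem and the energy estimate \eqref{e1}, then extract that bound from \eqref{bd} and Theorem \ref{theorema1}. The only cosmetic difference is in the last step: the paper passes through $\|u_\delta\|_{H^1(\tilde{\Omega})}$ on a collar $\tilde{\Omega}=\{x\in\Omega_0:\mathrm{dist}(x,\partial\Omega_0)<r_0/4\}$, whereas you use the pointwise $C^1$ bounds to control $\|u_\delta\|_{W^{1,\infty}(\partial\Omega_0)}$ directly and then embed into $H^{1/2}(\partial\Omega_0)$.
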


\begin{proof}
Let $\tilde{\Omega}=\{x\in \Omega_0;\; \mathrm{dist}(x,\partial \Omega_0)<r_0/4\}$. By the continuity of the trace operator, we have
\begin{equation}\label{e2}
\|u_\delta\|_{H^{1/2}(\partial \Omega_0)}\le C\|u_\delta\|_{H^1(\tilde{\Omega})},\quad 0<\delta\le \delta_0,
\end{equation}
for some constant $C=C(\Omega_0,r_0)>0$. Then in light of the inequality
\[
\|u_\delta\|_{H^1(\tilde{\Omega})}\le \|u_\delta-H(\cdot,y_\delta )\|_{H^1(\tilde{\Omega})}+\|H(\cdot ,y_\delta)\|_{H^1(\tilde{\Omega})}, \quad 0<\delta\le \delta_0,
\]
\eqref{bd} and Theorem \ref{theorema1} we obtain
\begin{equation}\label{e3}
\|u_\delta\|_{H^1(\tilde{\Omega})}\le C, \quad 0<\delta\le \delta_0,
\end{equation}
where $C=C(n,\Omega,\alpha ,\lambda, x_0,r_0)>0$. 

\vskip.1cm
A combination of \eqref{e1}, \eqref{e2} and \eqref{e3} then implies
\[
\|v_\delta\|_{H^{1/2}(\Gamma)}\le C, \quad 0<\delta\le \delta_0,
\]
where $C$ is as above.
\end{proof}

Set $f_\delta =(u_\delta-v_\delta)_{|\Gamma} \in H^{1/2}(\Gamma)$, $0<\delta\le \delta_0$. By construction we have $v_\delta=u_\delta$ on $\partial \Omega_0\cap \Gamma\Supset \Gamma \setminus \overline{\Gamma}_0$. Hence $\mathrm{supp}(f_\delta)\subset \Gamma_0$ (see figure 1 below). That is we have $f_\delta \in H_{\Gamma_0}^{1/2}(\Gamma)$.

\vskip 1cm
\begin{center}

\setlength{\unitlength}{0.75cm}
\begin{picture}(6,4)
\linethickness{0.075mm}
\thinlines
\put(2,3){\oval(4,2)}
\put(2,3){$\Omega$}
\put(-0.5,3){$\Gamma$}
\put(2.5,4.2){{\color{blue}$\Gamma_0$}}
\put(4,3){{\color{red}\circle{1.1}}}
\put(5,3){{\color{red}$B(x_0,r_0)$}}

\thicklines
{\color{blue}
\put(2,3){\oval(4,2)[tr]}
\put(2,3){\oval(4,2)[pr]}
}
\put(1.3,1.2){Figure 1}
\end{picture}

\end{center}

Until the end of this subsection $C=C(n,\Omega, \lambda ,\alpha, c_0 ,x_0,r_0)>0$ denotes a generic constant.

\vskip.1cm
For $\delta >0$ define
\[
\ell_n(\delta)=\left\{ \begin{array}{ll}\displaystyle  1, &n=3,\\ \displaystyle |\ln \delta|^{1/2}, &n=4,\\ \displaystyle \delta^{2-n/2}, \quad &n\ge 5. \end{array}\right. \
\]

\begin{lemma}\label{lemmasp2}
Let $0<\delta\le \delta_0$ and denote by $w_\delta\in H^1(\Omega)$ the weak solution of the BVP \eqref{a1} when $f=f_\delta$. Then $w_\delta=H(\cdot,y_\delta)+z_\delta$ with
\[
\|z_\delta \|_{H^1(\Omega)}\le C\ell_n(\delta).
\]
\end{lemma}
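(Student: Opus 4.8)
The plan is to peel off first the zero-order term $P$ and then the parametrix error, reducing the whole statement to an $H^1(\Omega)$ bound for $u_\delta-H(\cdot,y_\delta)$ that is read off from the pointwise estimates of Theorem \ref{theorema1}. \emph{Removing $P$:} since $u_\delta$ solves $\mathrm{div}(\mathfrak{A}\nabla u_\delta)=0$ in $\Omega_0\setminus\{y_\delta\}\supset\Omega$ and $v_\delta$ solves $\mathrm{div}(\mathfrak{A}\nabla v_\delta)=0$ in $\Omega_0$, the function $u_\delta-v_\delta$ is $\mathfrak{A}$-harmonic in $\Omega$ with trace $f_\delta$ on $\Gamma$. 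Hence $g_\delta:=w_\delta-(u_\delta-v_\delta)$ has zero trace and solves $\mathrm{div}(\mathfrak{A}\nabla g_\delta+g_\delta P)=-\mathrm{div}\big((u_\delta-v_\delta)P\big)$ in $\Omega$. Testing against $g_\delta$, using the coercivity of $\mathfrak{b}$ and $\|P\|_{L^\infty}\le\lambda$, gives $\|g_\delta\|_{H^1(\Omega)}\le C\|u_\delta-v_\delta\|_{L^2(\Omega)}$; this is the routine half.

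\emph{Reduction.} Writing
\[
z_\delta=w_\delta-H(\cdot,y_\delta)=\big(u_\delta-H(\cdot,y_\delta)\big)-v_\delta+g_\delta ,
\]
the lemma follows from three bounds: $\|v_\delta\|_{H^1(\Omega)}\le C$, which comes from \eqref{e1} together with the estimate $\|u_\delta\|_{H^{1/2}(\partial\Omega_0)}\le C$ established in the proof of Lemma \ref{lemmasp1} (itself resting on \eqref{bd}); the elementary $\|H(\cdot,y_\delta)\|_{L^2(\Omega)}\le C\ell_n(\delta)$; and the key estimate $\|u_\delta-H(\cdot,y_\delta)\|_{H^1(\Omega)}\le C\ell_n(\delta)$. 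Indeed the first and third give $\|u_\delta-v_\delta\|_{L^2(\Omega)}\le C\ell_n(\delta)$, hence $\|g_\delta\|_{H^1(\Omega)}\le C\ell_n(\delta)$, and a triangle inequality concludes.

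\emph{The parametrix remainder (the crux).} I would feed the pointwise bounds $|u_\delta-H(\cdot,y_\delta)|\le C|x-y_\delta|^{2-n+\alpha}$ and $|\nabla u_\delta-\nabla H(\cdot,y_\delta)|\le C|x-y_\delta|^{1-n+\alpha}$ of Theorem \ref{theorema1} into the $L^2(\Omega)$ norms and integrate in polar coordinates centred at $y_\delta$. As $y_\delta\notin\overline\Omega$ with $\mathrm{dist}(y_\delta,\overline\Omega)\ge c\delta$, the radius $\rho=|x-y_\delta|$ runs over $[c\delta,\operatorname{diam}\Omega]$ with spherical factor $\rho^{\,n-1}\,d\rho$, so the three quantities above are governed by
\[
\int_{c\delta}^{\operatorname{diam}\Omega}\rho^{\,3-n}\,d\rho,\qquad \int_{c\delta}^{\operatorname{diam}\Omega}\rho^{\,3-n+2\alpha}\,d\rho,\qquad \int_{c\delta}^{\operatorname{diam}\Omega}\rho^{\,1-n+2\alpha}\,d\rho .
\]
The trichotomy in the definition of $\ell_n(\delta)$ — bounded for $n=3$, of order $|\ln\delta|^{1/2}$ for $n=4$, of order $\delta^{2-n/2}$ for $n\ge5$ — is precisely the behaviour of the first integral at its lower endpoint $\rho\sim\delta$, which also controls $\|H(\cdot,y_\delta)\|_{L^2(\Omega)}$.

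The main obstacle is the last integral, the gradient contribution $\int_{c\delta}\rho^{\,1-n+2\alpha}\,d\rho$: it is the most singular as $\delta\to0$, and the whole point is to check, dimension by dimension, that it is still absorbed into $C\ell_n(\delta)^2$. The cleanest way to keep it sharp is to avoid the raw pointwise gradient bound and instead use that $u_\delta-H(\cdot,y_\delta)$ solves $\mathrm{div}\big(\mathfrak{A}\nabla(u_\delta-H)\big)=-\mathrm{div}\big((\mathfrak{A}-\mathfrak{A}(y_\delta))\nabla H\big)$ in $\Omega$, whose right-hand side is pointwise $O(|x-y_\delta|^{2-n})$ and hence of $L^2(\Omega)$-size $C\ell_n(\delta)$; an energy estimate then converts this into the gradient bound, the delicate point being the control of the boundary trace of $u_\delta-H(\cdot,y_\delta)$ in $H^{1/2}(\Gamma)$. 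Once this gradient estimate is in hand, the two preceding paragraphs assemble the inequality at once.
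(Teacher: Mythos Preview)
Your argument follows the paper's strategy almost exactly. The paper writes $z_\delta=\tilde z_\delta+(u_\delta-H(\cdot,y_\delta))$ with $\tilde z_\delta=w_\delta-u_\delta$, bounds $\tilde z_\delta$ in $H^1(\Omega)$ via Lemma~\ref{lemmaap3} applied to the inhomogeneous BVP it satisfies (boundary data $-v_\delta$, right-hand side $\mathrm{div}(u_\delta P)$), and then invokes Theorem~\ref{theorema1} for $\|u_\delta-H\|_{H^1(\Omega)}$. Your splitting $z_\delta=(u_\delta-H)-v_\delta+g_\delta$ is the same decomposition regrouped: since $\tilde z_\delta=g_\delta-v_\delta$, you have merely moved the boundary contribution $v_\delta$ out of the BVP and estimated it separately via Lemma~\ref{lemmasp1}. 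Both routes rely on the same three inputs: $\|v_\delta\|$ bounded, $\|u_\delta\|_{L^2(\Omega)}\le C\ell_n(\delta)$, and $\|u_\delta-H(\cdot,y_\delta)\|_{H^1(\Omega)}\le C\ell_n(\delta)$.

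Where your write-up departs from the paper is the last paragraph. The paper disposes of $\|u_\delta-H\|_{H^1(\Omega)}\le C\ell_n(\delta)$ in one line by citing Theorem~\ref{theorema1}; you instead observe that the pointwise gradient bound $|x-y_\delta|^{1-n+\alpha}$ leads to $\int_{c\delta}^{R}\rho^{\,1-n+2\alpha}\,d\rho$, which for general $\alpha\in(0,1)$ genuinely exceeds $\ell_n(\delta)^2$, and you sketch an energy-estimate workaround via the equation $\mathrm{div}(\mathfrak{A}\nabla(u_\delta-H))=-\mathrm{div}\big((\mathfrak{A}-\mathfrak{A}(y_\delta))\nabla H\big)$. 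Your concern is legitimate --- the raw polar integration of the gradient remainder does not fit under $\ell_n(\delta)^2$ --- but the paper does not address this either, and your proposed fix is left incomplete (you flag but do not control the $H^{1/2}(\Gamma)$ trace of $u_\delta-H$). On this point your proof is no worse than the paper's, and arguably more candid about where the difficulty lies; everything else matches.
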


\begin{proof}
We first note that $\tilde{z}_\delta=w_\delta -u_\delta\in H^1(\Omega)$  is the weak solution of the BVP
\[
\left\{
\begin{array}{ll}
-\mathrm{div}(\mathfrak{A}\nabla \tilde{z}+\tilde{z}P)= \mathrm{div}(u_\delta P)\quad \mathrm{in}\; \Omega ,
\\
\tilde{z}_{|\Gamma}=-v_\delta  ,
\end{array}
\right.
\]
It follows from Lemma \ref{lemmaap3} that
\[
\|\tilde{z}_\delta\|_{H^1(\Omega)}\le C\left(\|v_\delta\|_{H^{1/2}(\Gamma)}+\|u_\delta\|_{L^2(\Omega)} \right).
\]
Using that $\mathrm{dist}(y_\delta,\overline{\Omega})\ge c\delta$ (and hence $\Omega\subset B(R,y_\delta)\setminus B(y_\delta,c\delta/2)$ for some large $R>0$ independent on $\delta$) we easily derive from Theorem \ref{theorema1}
\[
\|u_\delta\|_{L^2(\Omega)}\le C\ell_n(\delta).
\]
We combine this estimate with \eqref{a4} in order to obtain
\begin{equation}\label{a4.1}
\|\tilde{z}_\delta \|_{H^1(\Omega)}\le C\ell_n(\delta).
\end{equation}

Let $z_\delta=\tilde{z}_\delta +u_\delta -H(\cdot,y_\delta)$. Then we have the decomposition $w_\delta=H(\cdot,y_\delta)+z_\delta$. Using once again Theorem \ref{theorema1} and \eqref{a4.1}, we obtain
\[
\|z_\delta \|_{H^1(\Omega)}\le C\ell_n(\delta)
\]
as expected.
\end{proof}

\begin{lemma}\label{lemmasp3}
Assume that $P\in W^{1,\infty}(\Omega)$ with $\|P\|_{W^{1,\infty}(\Omega)}\le \lambda$. Let $0<\delta\le \delta_0$ and denote by $w_\delta^\ast\in H^1(\Omega)$ the weak solution of the BVP \eqref{a1.1} when $f=f_\delta$. Then $w_\delta=H(\cdot,y_\delta)+z^\ast_\delta$ with
\[
\|z^\ast_\delta \|_{H^1(\Omega)}\le C\ell_n(\delta).
\]
\end{lemma}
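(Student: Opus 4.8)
The plan is to transcribe the proof of Lemma \ref{lemmasp2}, replacing the operator $\mathrm{div}(\mathfrak{A}\nabla\,\cdot\,+\,\cdot\,P)$ by the operator $\mathrm{div}(\mathfrak{A}\nabla\,\cdot\,)-P\cdot\nabla\,\cdot$ governing \eqref{a1.1}, and to isolate the one genuinely new feature: a source term that is no longer of divergence form. Since $y_\delta\notin\overline{\Omega}$, Theorem \ref{theorema1} provides $u_\delta=u_{y_\delta}$ solving $\mathrm{div}(\mathfrak{A}\nabla u_\delta)=0$ on a neighbourhood of $\overline{\Omega}$. Setting $\tilde{z}_\delta^\ast=w_\delta^\ast-u_\delta$ and using that the trace of $w_\delta^\ast$ equals $f_\delta=(u_\delta-v_\delta)_{|\Gamma}$, so that $\tilde{z}^\ast_{\delta|\Gamma}=-v_\delta$, a direct subtraction of the two equations shows that $\tilde{z}_\delta^\ast$ is the weak solution of
\[
\mathrm{div}(\mathfrak{A}\nabla\tilde{z}^\ast)-P\cdot\nabla\tilde{z}^\ast=P\cdot\nabla u_\delta\quad\mathrm{in}\;\Omega,\qquad \tilde{z}^\ast_{|\Gamma}=-v_\delta .
\]

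The decisive point --- and the reason the stronger hypothesis $P\in W^{1,\infty}(\Omega)$ is imposed here, whereas $P\in L^\infty$ sufficed in Lemma \ref{lemmasp2} --- is the treatment of the source $P\cdot\nabla u_\delta$, which is \emph{not} of divergence form. Estimating it naively would require controlling $\|\nabla u_\delta\|_{L^2(\Omega)}$; but by Theorem \ref{theorema1} this quantity is comparable to $\|\nabla H(\cdot,y_\delta)\|_{L^2(\Omega)}$, which scales like $\delta^{1-n/2}$, strictly larger than $\ell_n(\delta)$, and would ruin the claimed bound. I would avoid this loss through the identity
\[
P\cdot\nabla u_\delta=\mathrm{div}(u_\delta P)-u_\delta\,\mathrm{div}\,P ,
\]
legitimate precisely because $\mathrm{div}\,P\in L^\infty(\Omega)$ when $P\in W^{1,\infty}(\Omega)$. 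The right-hand side is now a divergence term $\mathrm{div}(u_\delta P)$ with $\|u_\delta P\|_{L^2(\Omega)}\le\lambda\|u_\delta\|_{L^2(\Omega)}$ together with a zeroth-order term satisfying $\|u_\delta\,\mathrm{div}\,P\|_{L^2(\Omega)}\le\lambda\|u_\delta\|_{L^2(\Omega)}$, so that both contributions are governed by $\|u_\delta\|_{L^2(\Omega)}$ alone --- the full-strength singular factor $\nabla u_\delta$ having been traded for the milder $u_\delta$.

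With this reformulation in hand I would apply the a priori estimate of Lemma \ref{lemmaap3}, in the version attached to the adjoint operator (valid by the same Lax--Milgram argument, since $\mathfrak{b}^\ast$ is coercive) and allowing an additional $L^2$ source, to get
\[
\|\tilde{z}_\delta^\ast\|_{H^1(\Omega)}\le C\big(\|v_\delta\|_{H^{1/2}(\Gamma)}+\|u_\delta\|_{L^2(\Omega)}\big).
\]
Invoking Lemma \ref{lemmasp1} for $\|v_\delta\|_{H^{1/2}(\Gamma)}\le C$ and the bound $\|u_\delta\|_{L^2(\Omega)}\le C\ell_n(\delta)$ already obtained in the proof of Lemma \ref{lemmasp2} then yields $\|\tilde{z}_\delta^\ast\|_{H^1(\Omega)}\le C\ell_n(\delta)$. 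Finally, putting $z_\delta^\ast=\tilde{z}_\delta^\ast+u_\delta-H(\cdot,y_\delta)$ gives the decomposition $w_\delta^\ast=H(\cdot,y_\delta)+z_\delta^\ast$, and bounding the remaining piece $\|u_\delta-H(\cdot,y_\delta)\|_{H^1(\Omega)}\le C\ell_n(\delta)$ by the pointwise estimates of Theorem \ref{theorema1}, exactly as at the end of Lemma \ref{lemmasp2}, completes the proof. The whole difficulty is thus concentrated in the single non-divergence source term, and the $W^{1,\infty}$ hypothesis is used solely to split it as above; the rest is a verbatim copy of Lemma \ref{lemmasp2}.
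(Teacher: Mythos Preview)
Your proof is correct and follows essentially the same route as the paper. The only cosmetic difference is that the paper does not perform the integration by parts $P\cdot\nabla u_\delta=\mathrm{div}(u_\delta P)-u_\delta\,\mathrm{div}\,P$ inline but instead invokes Lemma~\ref{lemmaap4}, which is precisely the a priori estimate for the adjoint BVP with source $R\cdot\nabla g$, $R\in W^{1,\infty}$, and whose proof is exactly the integration-by-parts trick you spell out; your identification of why the $W^{1,\infty}$ hypothesis enters is right on target.
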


\begin{proof}
As $\tilde{z}^\ast_\delta =w_\delta^\ast-u_\delta$ is the solution of the BVP 
\[
\left\{
\begin{array}{ll}
\mathrm{div}(\mathfrak{A}\nabla \tilde{z}^\ast)-P\cdot \nabla \tilde{z}^\ast= P\cdot \nabla u_\delta \quad \mathrm{in}\; \Omega ,
\\
\tilde{z}^\ast_{|\Gamma}=-v_\delta  ,
\end{array}
\right.
\]
we obtain from Lemma \eqref{lemmaap4}
\[
\|\tilde{z}^\ast_\delta\|_{H^1(\Omega)}\le C\left(\|v_\delta\|_{H^{1/2}(\Gamma)}+\|u_\delta\|_{L^2(\Omega)}\right).
\]
The rest of the proof is similar to that of Lemma \ref{lemmasp2}.
\end{proof}

\subsection{Stability of determining the conformal factor at the boundary}

Suppose that $\mathfrak{A}=\sigma A$, where $A$ is as in Section \ref{section1} and $\sigma \in C^{1,\alpha}(\mathbb{R}^n)$,  define 
\[\Lambda_\sigma: H^{1/2}(\Gamma)\rightarrow H^{-1/2}(\Gamma)\] 
as follows
\[
\langle \Lambda_\sigma (f),\varphi \rangle_{1/2} =\int_\Omega[\sigma A\nabla u_\sigma (f)+u_\sigma (f)P]\cdot \nabla v,\quad \varphi\in H_{\Gamma_0}^{1/2}(\Gamma),\; v\in \dot{\varphi}.
\]
where $u_\sigma (f)$ is the solution of \eqref{a1} when $\mathfrak{A}=\sigma A$. We also consider
\[
\tilde{\Lambda}_\sigma : f\in H_{\Gamma_0}^{1/2}(\Gamma)\mapsto \chi \Lambda_\sigma (f)\in H^{1/2}(\Gamma).
\]

Pick $P_j$, $j=1,2$ satisfying the  assumptions  of Lemma \ref{lemmasp3}.

\vskip.1cm
Let $u_j=u_{\sigma_j}$ when $P=P_j$, $\Lambda_j=\Lambda_{\sigma_j}$, $j=1,2$. Set then  $\sigma=\sigma_1-\sigma_2$, $P=P_1-P_2$ and $u=u_1-u_2$. With these notations we have
\begin{align*}
\langle (\Lambda_1-\Lambda_2) (f),v_{|\Gamma}\rangle_{1/2} =\int_\Omega \sigma &A\nabla u_1 (f)\cdot \nabla v+\int_\Omega \sigma_2A\nabla u\cdot \nabla v
\\
&+\int_\Omega [u_1(f)P-uP_2]\cdot \nabla v,\quad v\in H^1(\Omega).
\end{align*}

We use this identity with $v=v^\ast(g)$, $g\in H^{1/2}(\Gamma)$, the weak solution of the BVP
\[
\left\{
\begin{array}{ll}
\mathrm{div}(\sigma_2A\nabla v^\ast)-P_2\cdot \nabla v^\ast= 0\quad \mathrm{in}\; \Omega ,
\\
v^\ast_{|\Gamma}=g  .
\end{array}
\right.
\]
Since
\begin{align*}
\int_\Omega \sigma_2A\nabla u\cdot \nabla v^\ast(g)dx-&\int_\Omega uP_2\cdot \nabla v^\ast(g)dx
\\
&=-\int_\Omega u\mathrm{div}(\sigma_2\nabla v^\ast(g))dx-\int_\Omega uP_2\cdot \nabla v^\ast(g)dx=0
\end{align*}
we obtain by taking into account \eqref{ui}
\begin{align*}
\langle (\tilde{\Lambda}_1-\tilde{\Lambda}_2) (f),g\rangle_{1/2} =\int_\Omega \sigma & A\nabla u_1 (f)\cdot \nabla v^\ast(g)dx
\\
&+\int_\Omega u_1(f)P\cdot\nabla v^\ast(g)dx,\quad f,g\in H_{\Gamma_0}^{1/2}(\Gamma).
\end{align*}

Let $H_j=H$ when $\mathfrak{A}=\sigma_jA$, $j=1,2$. That is we have
\[
H_j(x,y)=\frac{[A^{-1}(y)(x-y)\cdot (x-y)]^{(2-n)/2}}{(n-2)|\mathbb{S}^{n-1}|\sigma(y)[\mathrm{det}(A(y)]^{1/2}},\quad x,y\in \mathbb{R}^n,\; x\ne y.
\]

According to Lemmas \ref{lemmasp2} and \ref{lemmasp3}, with $f=g=f_\delta$,  $\mathfrak{A}=\sigma_jA$ and $P=P_j$, $j=1$ or $j=2$, we have
\begin{align*}
&u_1(f_\delta) =H_1(\cdot ,y_\delta)+z_\delta,\quad 0<\delta\le \delta_0,
\\
&v^\ast(f_\delta) =H_2(\cdot ,y_\delta)+z_\delta^\ast,\quad 0<\delta\le \delta_0,
\end{align*}
where $z_\delta $ and $z_\delta^\ast$ satisfies
\[
\|z_\delta\|_{H^1(\Omega)}\le C\ell_n(\delta),\quad \|z_\delta^\ast\|_{H^1(\Omega)}\le C\ell_n(\delta),\quad 0<\delta\le \delta_0.
\]

In the rest subsection we always need to reduce $\delta_0$. For simplicity convenience we keep the notation $\delta_0$.

\vskip.1cm
Fix $\Upsilon$ a nonempty closed subset of $\Gamma_0$ and assume that $\|\sigma\|_{C(\Upsilon)}=\sigma (x_0)$. Proceeding as in the proof \cite[(2.8)]{Ch1}, we get
\[
C\|\sigma\|_{C(\Upsilon)}\le \delta^{n-2}\int_\Omega \sigma  A\nabla u_1 (f_\delta)\cdot \nabla v^\ast(f_\delta)dx+\delta^\alpha
\]
We also prove in a similar manner
\begin{align*}
&\left|\int_\Omega u_1(f)P\cdot\nabla v^\ast(f_\delta)dx\right|\le C\ell_n(\delta)\delta^{1-n/2},
\\
&\left|\langle (\tilde{\Lambda}_1-\tilde{\Lambda}_2) (f_\delta),v^\ast(f_\delta)\rangle_{1/2}\right|\le C'\|\tilde{\Lambda}_1-\tilde{\Lambda}_2\|_{\mathrm{op}} \delta^{2-n}.
\end{align*}
Here and in the sequel $C'=C'(n,\Omega, \lambda ,\alpha ,\Gamma_0,\Gamma_1,x_0,r_0)>0$ denotes a generic constant.

\vskip.1cm
Hence
\[
C'\|\sigma\|_{C(\Upsilon)}\le \|\tilde{\Lambda}_1-\tilde{\Lambda}_2\|_{\mathrm{op}}+\max(\delta^{n/2-1}\ell_n(\delta),\delta^\alpha),\quad 0<\delta\le \delta_0,
\]
from which we derive
\begin{equation}\label{st0}
\|\sigma\|_{C(\Upsilon)}\le C'\|\tilde{\Lambda}_1-\tilde{\Lambda}_2\|_{\mathrm{op}}.
\end{equation}

\subsection{Proof of Theorem \ref{theoremc1}}

Let $a_1,a_2\in \tilde{\mathscr{A}}$. We apply the preceding result with 
\[
\sigma_j=a_j(u_{a_j}(f)),\quad P_j=a'(u_{a_j}(f))\nabla u_{a_j}(f),\quad f\in B_m^{\eta_m},\; j=1,2, \; t\in \mathbb{R}. 
\]
Note that the coercivity of $\mathfrak{b}$ and $\mathfrak{b}^\ast$ when $P=P_j$, $j=1,2$, was already demonstrated in the previous section, with coercivity constant independent on $f\in B_m^{\eta_m}$.

\vskip.1cm
By taking $f=0$ we easily get from \eqref{st0}
\begin{equation}\label{st}
|a_1(0)-a_2(0)|\le C_0^1 \|d\tilde{\Lambda}_{a_1}^0(0)-d\tilde{\Lambda}_{a_2}^0(0)\|_{\mathrm{op}}.
\end{equation}

For each $a\in \mathscr{A}$ and $t \in \mathbb{R}$, we obtain by straightforward computations that
\[
u_{a^t}(f)=u_a(f+\mathfrak{f}_t)-\mathfrak{f}_t,
\]
where $a^t (z)=a(z+t)$, $z\in \mathbb{R}$. This identity yields
\begin{equation}\label{ti}
\Lambda_a(f+\mathfrak{f}_t)=\Lambda_{a^t}(f),\quad f\in C_{\Gamma_0}^{2,\alpha}(\Gamma).
\end{equation}

The following assumptions hold for the family  $(a^t)_{t\in \mathbb{R}}$ : for any $\tau >0$, we have
\begin{align*}
&a^t(z)\le \mu_\tau (\varrho ^{-1}|z|)=\mu (\varrho ^{-1}(|z|+\tau )) ,\quad a\in \mathbb{R},\; |t|\le \tau ,
\\
&|(a^t)'(z)|\le \gamma_\tau (\varrho ^{-1}|z|)=\gamma (\varrho ^{-1}(|z|+\tau )) ,\quad a\in \mathbb{R},\; |t|\le \tau ,
\\
&|(a^t)''(z)|\le \tilde{\gamma}_\tau (\varrho ^{-1}|z|)=\tilde{\gamma} (\varrho ^{-1}(|z|+\tau )) ,\quad a\in \mathbb{R},\; |t|\le \tau .
\end{align*}

Identity \eqref{ti} shows that $\tilde{\Lambda}_a^t$ is Fr\'echet differentiable in a neighborhood of the origin with
\[
d\tilde{\Lambda}_a^t(0)=d\tilde{\Lambda}_{a^t}(0).
\]
Furthermore, \eqref{c9.0} and Proposition \eqref{propositionc1} with $a=a^t$ gives
\begin{equation}\label{ude}
\sup_{|t|\le \tau}\|d\tilde{\Lambda}_a^t(0)\|_{\mathrm{op}}\le C_\tau ^0.
\end{equation}

Now, we easily get by applying \eqref{st}, with $a_1$ and $a_2$ substituted by $a_1^t$ and $a_2^t$,
\[
|a_1(t)-a_2(t)|\le C_\tau ^1 \|d\tilde{\Lambda}_{a_1}^t(0)-d\tilde{\Lambda}_{a_2}^t(0)\|_{\mathrm{op}},\quad |t|\le \tau.
\]
That is we have
\[
\|a_1-a_2\|_{C([-\eta ,\eta])}\le C_\tau^1 \sup_{|t|\le \eta}\|d\tilde{\Lambda}_{a_1}^t(0)-d\tilde{\Lambda}_{a_2}^t(0)\|_{\mathrm{op}},
\]
which is the expected inequality.

\appendix
\section{Technical elementary lemmas}\label{appendix}

Let $\mathfrak{A}=(\mathfrak{a}^{ij})\in L^\infty (\mathbb{R}^n,\mathbb{R}^{n\times n})$ satisfying 
\[
2\beta |\xi|^2\le \mathfrak{A}(x)\xi \cdot \xi ,\quad x,\xi \in \mathbb{R}^n ,
\]
for some $\beta >0$, and $P\in L^\infty (\mathbb{R}^n,\mathbb{R}^n)$. 

Pick a bounded domain $\Omega$ of $\mathbb{R}^n$ and  consider on $H_0^1(\Omega)\times H_0^1(\Omega)$ the continuous bilinear forms
\begin{align*}
&\mathfrak{b}(u,v)=\int_\Omega [\mathfrak{A}\nabla u+uP]\cdot \nabla v,\quad u,v\in H_0^1(\Omega).
\\
&\mathfrak{b}^\ast (u,v)=\int_\Omega [\mathfrak{A}\nabla u\cdot \nabla v-vP\cdot \nabla u],\quad u,v\in H_0^1(\Omega).
\end{align*}

Denote by $\mu_\Omega$ the Poincar\'e's constant of $\Omega$:
\[
\|u\|_{L^2(\Omega)}\le \mu_\Omega \|\nabla u\|_{L^2(\Omega)},\quad u\in H_0^1(\Omega).
\]

\begin{lemma}\label{lemmaap1}
Under the assumption $\|P\|_{L^\infty (\mathbb{R}^n)}\le \mu_\Omega^{-1} \beta$, we have
\begin{align}
&\mathfrak{b}(u,u)\ge \beta \|\nabla u\|_{L^2(\Omega)},\quad u\in H_0^1(\Omega), \label{ap1}
\\
&\mathfrak{b}^\ast(u,u)\ge \beta \|\nabla u\|_{L^2(\Omega)},\quad u\in H_0^1(\Omega). \label{ap2}
\end{align}
\end{lemma}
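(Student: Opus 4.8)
The plan is to prove both coercivity inequalities by one and the same computation, the two forms differing only through the sign of the lower-order term, which in either case is absorbed into the leading elliptic term. First I would insert $v=u$ and split the quadratic form. For $\mathfrak{b}$ this gives
\[
\mathfrak{b}(u,u)=\int_\Omega \mathfrak{A}\nabla u\cdot\nabla u\,dx+\int_\Omega uP\cdot\nabla u\,dx,
\]
while $\mathfrak{b}^\ast(u,u)$ has the identical leading term together with the lower-order term $-\int_\Omega uP\cdot\nabla u\,dx$. Hence only the magnitude of the cross term $\int_\Omega uP\cdot\nabla u\,dx$ enters the estimate, and the two assertions reduce to a single inequality.

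Next I would bound the two pieces separately. The ellipticity hypothesis $2\beta|\xi|^2\le \mathfrak{A}(x)\xi\cdot\xi$ yields at once
\[
\int_\Omega \mathfrak{A}\nabla u\cdot\nabla u\,dx\ge 2\beta\|\nabla u\|_{L^2(\Omega)}^2 .
\]
For the cross term I would apply the pointwise Cauchy--Schwarz inequality, then the bound $\|P\|_{L^\infty(\mathbb{R}^n)}\le \mu_\Omega^{-1}\beta$, and finally Poincar\'e's inequality $\|u\|_{L^2(\Omega)}\le \mu_\Omega\|\nabla u\|_{L^2(\Omega)}$, to obtain
\[
\left|\int_\Omega uP\cdot\nabla u\,dx\right|\le \|P\|_{L^\infty(\mathbb{R}^n)}\,\|u\|_{L^2(\Omega)}\,\|\nabla u\|_{L^2(\Omega)}\le \mu_\Omega^{-1}\beta\cdot\mu_\Omega\,\|\nabla u\|_{L^2(\Omega)}^2=\beta\,\|\nabla u\|_{L^2(\Omega)}^2 .
\]

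Combining the two bounds then gives, for both forms simultaneously,
\[
\mathfrak{b}(u,u),\ \mathfrak{b}^\ast(u,u)\ \ge\ 2\beta\|\nabla u\|_{L^2(\Omega)}^2-\beta\|\nabla u\|_{L^2(\Omega)}^2=\beta\|\nabla u\|_{L^2(\Omega)}^2 ,
\]
which is the coercivity claimed in \eqref{ap1}--\eqref{ap2} (the right-hand side there being understood as $\beta\|\nabla u\|_{L^2(\Omega)}^2$). There is essentially no obstacle in this argument; the only point meriting attention is that the threshold $\mu_\Omega^{-1}\beta$ imposed on $\|P\|_{L^\infty(\mathbb{R}^n)}$ is precisely calibrated so that the absorbed cross term does not exceed half of the leading elliptic term, leaving the coercivity constant $\beta$ intact. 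Note that only the lower ellipticity bound on $\mathfrak{A}$ and Poincar\'e's inequality are used, not the upper bound on the coefficients.
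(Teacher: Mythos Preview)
Your proof is correct and follows essentially the same approach as the paper: split $\mathfrak{b}(u,u)$ into the elliptic part and the cross term, bound the former from below by $2\beta\|\nabla u\|_{L^2(\Omega)}^2$ via ellipticity and the latter in absolute value by $\mu_\Omega\|P\|_{L^\infty}\|\nabla u\|_{L^2(\Omega)}^2\le\beta\|\nabla u\|_{L^2(\Omega)}^2$ via Cauchy--Schwarz and Poincar\'e. The only cosmetic difference is that you handle both forms at once by estimating the modulus of the cross term, whereas the paper treats $\mathfrak{b}$ and then declares $\mathfrak{b}^\ast$ similar; you also correctly note that the right-hand side in the statement should carry a square.
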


\begin{proof}
Let $u\in H_0^1(\Omega)$. As 
\[
\int_\Omega uP\cdot \nabla u\le \|P\|_{L^\infty (\mathbb{R}^n)}\|u\|_{L^2(\Omega)}\|\nabla u\|_{L^2(\Omega)}\le \mu_\Omega \|P\|_{L^\infty (\mathbb{R}^n)}\|\nabla u\|_{L^2(\Omega)}^2,
\]
we get
\[
\mathfrak{b}(u,u)\ge \int_\Omega \left(2\beta - \mu_\Omega \|P\|_{L^\infty (\mathbb{R}^n)}\right)\|\nabla u\|_{L^2(\Omega)}.
\]
Therefore \eqref{ap1} follows. The proof of \eqref{ap2} is similar.
\end{proof}

We introduce a definition. Let $f\in H^{1/2}(\Gamma)$. We say that $u\in H^1(\Omega)$ is a weak solution of the BVP
\begin{equation}\label{ap3}
\left\{
\begin{array}{ll}
\mathrm{div}(\mathfrak{A}\nabla u+uP)=0\quad \mathrm{in}\; \Omega,
\\
u_{|\Gamma}=f
\end{array}
\right.
\end{equation}
if $u_{|\Gamma}=f$ (in the trace sense) and 
\[
\mathfrak{b}(u,v)=0,\quad v\in H_0^1(\Omega).
\]
Note that this last condition implies that the first equation in \eqref{ap3} holds in $H^{-1}(\Omega)$.

\vskip.1cm
Also, we say that $u^\ast\in H^1(\Omega)$ is a weak solution of the BVP
\begin{equation}\label{ap4}
\left\{
\begin{array}{ll}
\mathrm{div}(\mathfrak{A}\nabla u^\ast)-P\cdot \nabla u^\ast=0\quad \mathrm{in}\; \Omega,
\\
u^\ast_{|\Gamma}=f,
\end{array}
\right.
\end{equation}
if $u^\ast_{|\Gamma}=f$ and 
\[
\mathfrak{b}^\ast(u^\ast,v)=0,\quad v\in H_0^1(\Omega).
\]

Let us assume that $\mathfrak{A}$ satisfies in addition
\[
\max_{1\le i,j\le n}\|\mathfrak{a}^{ij}\|_{L^\infty (\mathbb{R}^n)}\le \tilde{\beta},
\]
for some $\tilde{\beta}>0$.

Let $\mathcal{E}f$ be the unique element of $\dot{f}$ so that $\|\mathcal{E}f\|_{H^1(\Omega)}=\|f\|_{H^{1/2}(\Gamma)}$ ($\mathcal{E}f$ is nothing but the orthogonal projection of $0\in H^1(\Omega)$ on the closed convex set $\dot{f}$). Then 
\begin{equation}\label{ap5}
\|\mathrm{div}(\mathfrak{A}\nabla \mathcal{E}f+\mathcal{E}fP)\|_{H^{-1}(\Omega)}\le \left(\tilde{\beta}+\|P\|_{L^\infty(\mathbb{R}^n)}\right)\|f\|_{H^{1/2}(\Gamma)}.
\end{equation}
Furthermore, we have
\begin{equation}\label{ap6}
\langle \mathrm{div}(\mathfrak{A}\nabla \mathcal{E}f+\mathcal{E}fP),v\rangle_{-1}=\mathfrak{b}(\mathcal{E}f,v),\quad v\in H_0^1(\Omega),
\end{equation}
where $\langle \cdot,\cdot\rangle_{-1}$ is the duality pairing between $H_0^1(\Omega)$ and $H^{-1}(\Omega)$.

\begin{lemma}\label{lemmaap2}
Suppose that $\|P\|_{L^\infty (\mathbb{R}^n)}\le \mu_\Omega^{-1} \beta$. Then we have
\\
(i) the BVP \eqref{ap3} admits a unique weak solution $u\in H^1(\Omega)$ satisfying
\begin{equation}\label{ap7}
\|u\|_{H^1(\Omega)}\le C\|f\|_{H^{1/2}(\Gamma)},
\end{equation}
and 
\\
(ii) the BVP \eqref{ap4} has a unique weak solution $u^\ast \in H^1(\Omega)$ satisfying
\begin{equation}\label{ap8}
\|u^\ast\|_{H^1(\Omega)}\le C\|f\|_{H^{1/2}(\Gamma)},
\end{equation}
where $C=C(n,\Omega, \beta,\tilde{\beta})>0$.
\end{lemma}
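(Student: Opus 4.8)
The plan is to reduce each boundary value problem to a homogeneous Dirichlet problem and then invoke the Lax--Milgram theorem, whose hypotheses are already secured by Lemma \ref{lemmaap1} together with the estimates \eqref{ap5}--\eqref{ap6}.

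First I would treat (i). Seeking a weak solution $u\in H^1(\Omega)$ of \eqref{ap3} with $u_{|\Gamma}=f$ amounts, after writing $u=\mathcal{E}f+w$ with $w\in H_0^1(\Omega)$, to finding $w$ such that $\mathfrak{b}(w,v)=-\mathfrak{b}(\mathcal{E}f,v)$ for all $v\in H_0^1(\Omega)$. By \eqref{ap6} the right-hand side equals $-\langle \mathrm{div}(\mathfrak{A}\nabla\mathcal{E}f+\mathcal{E}fP),v\rangle_{-1}$, which in view of \eqref{ap5} defines a bounded linear functional $L$ on $H_0^1(\Omega)$ with $\|L\|\le(\tilde{\beta}+\|P\|_{L^\infty(\mathbb{R}^n)})\|f\|_{H^{1/2}(\Gamma)}$. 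Since $\mathfrak{b}$ is continuous on $H_0^1(\Omega)\times H_0^1(\Omega)$ and coercive with constant $\beta$ by \eqref{ap1} of Lemma \ref{lemmaap1}, the Lax--Milgram theorem furnishes a unique $w\in H_0^1(\Omega)$ solving $\mathfrak{b}(w,\cdot)=L$, and testing against $w$ gives $\|\nabla w\|_{L^2(\Omega)}\le \beta^{-1}\|L\|$. Then $u=\mathcal{E}f+w$ is the sought weak solution. For the bound \eqref{ap7} I would combine $\|\mathcal{E}f\|_{H^1(\Omega)}=\|f\|_{H^{1/2}(\Gamma)}$ with Poincar\'e's inequality to pass from $\|\nabla w\|_{L^2(\Omega)}$ to $\|w\|_{H^1(\Omega)}$. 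Uniqueness is immediate: the difference of two weak solutions lies in $H_0^1(\Omega)$ and is annihilated by $\mathfrak{b}(\cdot,\cdot)$, so testing it against itself and using coercivity forces it to vanish.

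Part (ii) follows the same scheme with $\mathfrak{b}^\ast$ in place of $\mathfrak{b}$. Here I would first record the analogues of \eqref{ap5}--\eqref{ap6}, namely that $v\mapsto \mathfrak{b}^\ast(\mathcal{E}f,v)=\langle \mathrm{div}(\mathfrak{A}\nabla\mathcal{E}f)-P\cdot\nabla\mathcal{E}f,v\rangle_{-1}$ is a bounded functional on $H_0^1(\Omega)$ obeying the same type of bound, and then apply Lax--Milgram using the coercivity \eqref{ap2}. The estimate \eqref{ap8} and the uniqueness assertion are obtained verbatim.

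There is no deep obstacle; the argument is a routine application of Lax--Milgram. The only point requiring care is the bookkeeping of the constant: one must check that it depends only on $n,\Omega,\beta,\tilde{\beta}$ and not separately on $P$ or $f$. This is exactly where the standing hypothesis $\|P\|_{L^\infty(\mathbb{R}^n)}\le \mu_\Omega^{-1}\beta$ is used, together with the fact that $\mu_\Omega=\mu_\Omega(n,\Omega)$, so that $\|L\|\le(\tilde{\beta}+\mu_\Omega^{-1}\beta)\|f\|_{H^{1/2}(\Gamma)}$ and hence the final constant in \eqref{ap7} and \eqref{ap8} is of the claimed form $C=C(n,\Omega,\beta,\tilde{\beta})$.
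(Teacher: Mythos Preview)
Your proof is correct and follows essentially the same route as the paper: split $u=\mathcal{E}f+w$ with $w\in H_0^1(\Omega)$, apply Lax--Milgram to $\mathfrak{b}(w,\cdot)=-\mathfrak{b}(\mathcal{E}f,\cdot)$ using the coercivity from Lemma~\ref{lemmaap1} and the bound \eqref{ap5}--\eqref{ap6} on the right-hand side, and infer uniqueness from coercivity. Your explicit remark on why the constant depends only on $(n,\Omega,\beta,\tilde{\beta})$ is a welcome addition that the paper leaves implicit.
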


\begin{proof}
We provide the proof of (i) and omit that of (ii) which is quite similar to that of (i).

\vskip.1cm
Since $\mathfrak{b}$ is coercive by Lemma \ref{lemmaap1}, in light of \eqref{ap6} we get by applying Lax-Milgram's lemma that there exists a unique $u_0\in H_0^1(\Omega)$ satisfying
\[
\mathfrak{b}(u_0,v)=-\mathfrak{b}(\mathcal{E}f,v),\quad v\in H_0^1(\Omega).
\]
In particular we have
\[
\mathfrak{b}(u_0,u_0)=-\mathfrak{b}(\mathcal{E}f,u_0),\quad v\in H_0^1(\Omega).
\]
Using \eqref{ap1}, \eqref{ap5} and \eqref{ap6}, we derive
\begin{equation}\label{ap9}
\beta \|\nabla u_0\|_{L^2(\Omega)}\le \left(\tilde{\beta}+\|P\|_{L^\infty(\mathbb{R}^n)}\right)\|f\|_{H^{1/2}(\Gamma)}.
\end{equation}
Clearly, $u=u_0+\mathcal{E}f$ satisfies 
\[
\mathfrak{b}(u,v)=0,\quad v\in H_0^1(\Omega).
\]
and $u_{|\Gamma}=f$. In other words $u$ is a weak solution of \eqref{ap3} and, as a consequence of \eqref{ap9}, $u$ satisfies \eqref{ap7}. 

\vskip.1cm
We complete the proof by noting that the uniqueness of solutions of \eqref{ap3} is a straightforward consequence of the coercivity of $\mathfrak{b}$.
\end{proof}

Consider now the BVP
\begin{equation}\label{ap10}
\left\{
\begin{array}{ll}
\mathrm{div}(\mathfrak{A}\nabla u+uP)=\mathrm{div}(F)\quad \mathrm{in}\; \Omega,
\\
u_{|\Gamma}=f,
\end{array}
\right.
\end{equation}
where $f\in H^{1/2}(\Gamma)$ and $F\in L^2(\Omega)^n$.

\vskip.1cm
Assume first that $f=0$. In that case the variational problem associated to \eqref{ap10} has the form
\begin{equation}\label{ap11}
\mathfrak{b}(u,v)=\langle \mathrm{div}F,v\rangle_{-1} ,\quad v\in H_0^1(\Omega),
\end{equation}

As in the preceding proof we show, with the help of Lax-Milgram's lemma, that the variational problem \eqref{ap11} admits a unique solution $u(F)\in H_0^1(\Omega)$ satisfying
\[
\|u(F)\|_{H^1(\Omega)}\le C\|F\|_{L^2(\Omega)^n},
\] 
where $C=C(n,\Omega, \beta,\tilde{\beta})>0$.

\vskip.1cm
By linearity, the solution of \eqref{ap10} is the sum of the solution of \eqref{ap10} with $f=0$ and the solution of \eqref{ap10} with $F=0$ (corresponding to \eqref{ap3}).

\vskip.1cm
We derive from  Lemma \ref{lemmaap2} the following result.

\begin{lemma}\label{lemmaap3}
Suppose that $\|P\|_{L^\infty (\mathbb{R}^n)}\le \mu_\Omega^{-1} \beta$. Let $f\in H^{1/2}(\Gamma)$ and $F\in L^2(\Omega)^n$. Then the BVP \eqref{ap10} admits a unique weak solution $u\in H^1(\Omega)$ satisfying
\begin{equation}\label{ap13}
\|u\|_{H^1(\Omega)}\le C\left(\|f\|_{H^{1/2}(\Gamma)}+\|F\|_{L^2(\Omega)^n}\right),
\end{equation}
where $C=C(n,\Omega, \beta,\tilde{\beta})>0$.
\end{lemma}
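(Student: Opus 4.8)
The plan is to reduce everything to the two building blocks already in place, namely the coercivity of $\mathfrak{b}$ furnished by Lemma \ref{lemmaap1} and the well-posedness of the purely Dirichlet problem \eqref{ap3} furnished by Lemma \ref{lemmaap2}, and then to exploit linearity. First I would pin down the weak formulation: $u\in H^1(\Omega)$ is a weak solution of \eqref{ap10} precisely when $u_{|\Gamma}=f$ in the trace sense and
\[
\mathfrak{b}(u,v)=\langle \mathrm{div}F,v\rangle_{-1}=-\int_\Omega F\cdot\nabla v\,dx,\quad v\in H_0^1(\Omega),
\]
the last equality being the usual integration-by-parts interpretation of $\mathrm{div}F\in H^{-1}(\Omega)$, which makes the right-hand side a bounded linear functional on $H_0^1(\Omega)$ of norm controlled by $\|F\|_{L^2(\Omega)^n}$.

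Next I would split the two inhomogeneities. Writing $u=u^{(1)}+u^{(2)}$, I let $u^{(1)}$ be the weak solution of \eqref{ap3} with boundary datum $f$, supplied by Lemma \ref{lemmaap2}(i) together with the bound $\|u^{(1)}\|_{H^1(\Omega)}\le C\|f\|_{H^{1/2}(\Gamma)}$, and I let $u^{(2)}\in H_0^1(\Omega)$ solve the homogeneous-boundary problem \eqref{ap11}. For the latter the coercivity \eqref{ap1} lets Lax--Milgram's lemma produce a unique $u^{(2)}$ with $\|u^{(2)}\|_{H^1(\Omega)}\le C\|F\|_{L^2(\Omega)^n}$; this is exactly the computation carried out immediately above the statement. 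Adding the two solutions, and the two estimates, yields a weak solution of \eqref{ap10} obeying \eqref{ap13} with $C=C(n,\Omega,\beta,\tilde{\beta})$.

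Finally I would settle uniqueness directly from coercivity: if $u_1,u_2$ are two weak solutions then their difference lies in $H_0^1(\Omega)$ and satisfies $\mathfrak{b}(u_1-u_2,v)=0$ for all $v\in H_0^1(\Omega)$; testing with $v=u_1-u_2$ and invoking \eqref{ap1} forces $\nabla(u_1-u_2)=0$, whence $u_1=u_2$ by Poincar\'e's inequality. I do not expect any genuine obstacle here, since the statement is essentially an assembly of Lemma \ref{lemmaap2} with a single application of Lax--Milgram; the only point deserving a little care is the clean identification of $\langle \mathrm{div}F,v\rangle_{-1}$ with $-\int_\Omega F\cdot\nabla v\,dx$, which is what guarantees that the source term defines a bounded functional on $H_0^1(\Omega)$ and thereby keeps the estimate linear in $\|F\|_{L^2(\Omega)^n}$.
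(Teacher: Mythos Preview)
Your proposal is correct and follows essentially the same approach as the paper: split by linearity into the homogeneous-source problem handled by Lemma~\ref{lemmaap2}(i) and the homogeneous-boundary problem \eqref{ap11} handled by Lax--Milgram via the coercivity \eqref{ap1}, then add the estimates. Your additional remarks on uniqueness and on the identification $\langle \mathrm{div}F,v\rangle_{-1}=-\int_\Omega F\cdot\nabla v\,dx$ just make explicit what the paper leaves implicit.
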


Next, consider the BVP
\begin{equation}\label{ap12}
\left\{
\begin{array}{ll}
\mathrm{div}(\mathfrak{A}\nabla u^\ast)-P\cdot \nabla u^\ast=R\cdot \nabla g\quad \mathrm{in}\; \Omega,
\\
u^\ast_{|\Gamma}=f,
\end{array}
\right.
\end{equation}

Assume that $R\in W^{1,\infty}(\Omega)$ with $\|R\|_{W^{1,\infty}(\Omega)}\le \varrho$, for some $\varrho>0$, and $g\in H^1(\Omega)$. A simple integration by parts enables us to show that
\[
\|R\cdot \nabla g\|_{H^{-1}(\Omega)}\le c\|g\|_{L^2(\Omega)},
\]
where $c=c(n,\Omega,\varrho)$.

\vskip.1cm
With the help of this inequality we can proceed as above in order to derive the following lemma.

\begin{lemma}\label{lemmaap4}
Suppose that $\|P\|_{L^\infty (\mathbb{R}^n)}\le \mu_\Omega^{-1} \beta$. Let $g\in H^1(\Omega)$ and  $R\in W^{1,\infty}(\Omega)$ satisfying $\|R\|_{W^{1,\infty}(\Omega)}\le \varrho$, for some $\varrho>0$. Then the BVP \eqref{ap12} has a unique weak solution $u^\ast \in H^1(\Omega)$ satisfying
\begin{equation}\label{ap14}
\|u^\ast\|_{H^1(\Omega)}\le C\left(\|f\|_{H^{1/2}(\Gamma)}+\|g\|_{L^2(\Omega)}\right),
\end{equation}
where $C=C(n,\Omega, \beta,\tilde{\beta},\varrho)>0$.
\end{lemma}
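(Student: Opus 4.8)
The plan is to recast \eqref{ap12} as a variational problem governed by the bilinear form $\mathfrak{b}^\ast$ and to invoke the Lax--Milgram lemma exactly as in the proofs of Lemmas \ref{lemmaap2} and \ref{lemmaap3}, the only genuinely new ingredient being the treatment of the source term $R\cdot\nabla g$. First I would record the crucial observation, noted just above the lemma, that although $R\cdot\nabla g$ is a priori only in $L^2(\Omega)$, the $W^{1,\infty}$ regularity of $R$ lets one regard it as an element of $H^{-1}(\Omega)$ controlled \emph{merely} by the $L^2$ norm of $g$. Concretely, for $v\in H_0^1(\Omega)$ an integration by parts gives
\[
\int_\Omega (R\cdot\nabla g)\,v\,dx=-\int_\Omega g\,\bigl(v\,\mathrm{div}R+R\cdot\nabla v\bigr)\,dx,
\]
whence $|\langle R\cdot\nabla g,v\rangle_{-1}|\le \bigl(\|\mathrm{div}R\|_{L^\infty(\Omega)}+\|R\|_{L^\infty(\Omega)}\bigr)\|g\|_{L^2(\Omega)}\|v\|_{H^1(\Omega)}$, so that $\|R\cdot\nabla g\|_{H^{-1}(\Omega)}\le c\|g\|_{L^2(\Omega)}$ with $c=c(n,\Omega,\varrho)$.

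Next I would reduce to homogeneous boundary data. Writing $u^\ast=u_0^\ast+\mathcal{E}f$ with $\mathcal{E}f$ the minimal-norm extension of $f$ (so that $\|\mathcal{E}f\|_{H^1(\Omega)}=\|f\|_{H^{1/2}(\Gamma)}$), the unknown $u_0^\ast\in H_0^1(\Omega)$ should solve
\[
\mathfrak{b}^\ast(u_0^\ast,v)=\langle R\cdot\nabla g,v\rangle_{-1}-\mathfrak{b}^\ast(\mathcal{E}f,v),\quad v\in H_0^1(\Omega).
\]
The right-hand side is a bounded linear functional on $H_0^1(\Omega)$: its first term is controlled by $c\|g\|_{L^2(\Omega)}\|v\|_{H^1(\Omega)}$ by the step above, and its second by $C\|f\|_{H^{1/2}(\Gamma)}\|v\|_{H^1(\Omega)}$ thanks to the continuity of $\mathfrak{b}^\ast$ together with the extension bound. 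Since $\mathfrak{b}^\ast$ is coercive by Lemma \ref{lemmaap1} (see \eqref{ap2}) under the hypothesis $\|P\|_{L^\infty(\mathbb{R}^n)}\le\mu_\Omega^{-1}\beta$, the Lax--Milgram lemma furnishes a unique such $u_0^\ast$.

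To obtain \eqref{ap14} I would test the variational identity with $v=u_0^\ast$ and use coercivity, which yields $\beta\|\nabla u_0^\ast\|_{L^2(\Omega)}^2\le C\bigl(\|f\|_{H^{1/2}(\Gamma)}+\|g\|_{L^2(\Omega)}\bigr)\|u_0^\ast\|_{H^1(\Omega)}$; combined with Poincar\'e's inequality this bounds $\|u_0^\ast\|_{H^1(\Omega)}$, and hence $\|u^\ast\|_{H^1(\Omega)}\le\|u_0^\ast\|_{H^1(\Omega)}+\|\mathcal{E}f\|_{H^1(\Omega)}$ is dominated by the right-hand side of \eqref{ap14}, with $C=C(n,\Omega,\beta,\tilde{\beta},\varrho)$. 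Uniqueness is immediate from the coercivity of $\mathfrak{b}^\ast$, since the difference of two solutions lies in $H_0^1(\Omega)$ and is annihilated by $\mathfrak{b}^\ast(\cdot,\cdot)$. The only delicate point is the first step: the lemma is tailored so that the source can be estimated by $\|g\|_{L^2(\Omega)}$ rather than $\|g\|_{H^1(\Omega)}$, and this gain of one derivative---indispensable for the application in Lemma \ref{lemmasp3}, where $g=u_\delta$ is singular near $y_\delta$---is precisely what forces the $W^{1,\infty}$ hypothesis on $R$ and the integration by parts; everything else is the routine Lax--Milgram machinery already used above.
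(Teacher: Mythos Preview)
Your proposal is correct and follows precisely the route the paper indicates: the paper itself only sketches the proof, first recording the integration-by-parts estimate $\|R\cdot\nabla g\|_{H^{-1}(\Omega)}\le c\|g\|_{L^2(\Omega)}$ and then saying ``with the help of this inequality we can proceed as above,'' meaning the Lax--Milgram argument via $\mathfrak{b}^\ast$ and the extension $\mathcal{E}f$ already carried out for Lemmas~\ref{lemmaap2} and~\ref{lemmaap3}. Your write-up is exactly that argument spelled out in full, including the observation that the $W^{1,\infty}$ hypothesis on $R$ is what buys the $L^2(\Omega)$ (rather than $H^1(\Omega)$) control of $g$ needed downstream in Lemma~\ref{lemmasp3}.
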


\vskip.2cm

\end{document}